\newtheorem{theorem}{Theorem}[section]
\newtheorem{lemma}[theorem]{Lemma}
\begin{document}

   \title{Surface-link families with arbitrarily large triple point number}
   
   \author{Nicholas Cazet}

   \begin{abstract}
   
   Analogous to a classical link diagram, a surface-link can be generically projected to 3-space and given crossing information to create a broken sheet diagram.  The triple point number of a surface-link is the minimal number of triple points among all  broken sheet diagrams that lift to that surface-link. This paper generalizes a family of Oshiro to show that there are non-split surface-links of arbitrarily many trivial components whose triple point number can be made arbitrarily large. 
   
 \end{abstract}

\maketitle

\section{Introduction}

A {\it surface-link} is a smoothly embedded closed surface  in $\mathbb{R}^4$. A {\it 2-knot} is a surface-knot diffeomorphic to the 2-sphere, and a {\it surface-knot} will mean a connected surface-knot. Two surface-links are {\it equivalent} if they are related by an ambient isotopy in the smooth category. For an orthogonal projection $p: \mathbb{R}^4\to\mathbb{R}^3$, a surface-knot $F$ can be perturbed slightly so that $p(F)$ is a generic surface. Each point of $p(F)$ has a neighborhood in 3-space diffeomorphic  to $\mathbb{R}^3$ such that the image of the  generic surface under the diffeomorphism looks like 1, 2, or 3 coordinate planes or the cone on a figure 8 (Whitney umbrella).
These points are called regular points, double points,  triple points, and  branch points. Triple points and branch points are isolated while double points are not and lie on curves called {\it double point curves}. The union of non-regular points is the {\it singular set} of the generic surface \cite{CKS}, \cite{kamada2017surface}.

\begin{figure}[ht]
\includegraphics[scale=.25]{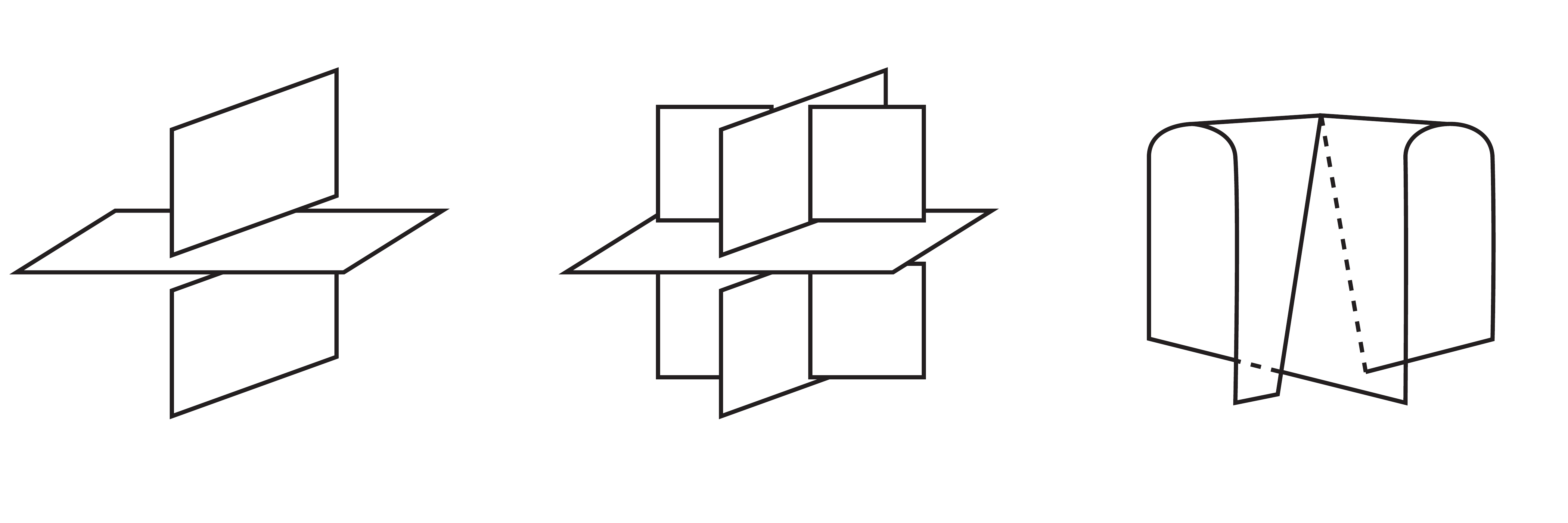}
\caption{Local images of a double point, triple point, and branch point.}
\label{fig:1}
\end{figure}

A {\it broken sheet diagram} of a surface-link $F$ is a generic projection $p(F)$ with  consistently broken sheets along double point curves,  see Figure \ref{fig:1} and \cite{cartersaito}. The sheet that lifts below the other, with respect to a height function determined by the direction of the orthogonal projection $p$, is locally broken at the singular set. All surface-links admit a broken sheet diagram, and all broken sheet diagrams lift to surface-knots in 4-space. Although, not all compact, generic surfaces in 3-space can be given a broken sheet structure  \cite{Carter1998}.

The minimal number of triple points among all generic projections of a surface-link $F$ is called the {\it triple point number} of $F$ and is denoted $t(F)$. The triple point number has an analogy to the crossing number of a classical knot. Although it is unknown if the crossing number of classical knots is additive under connected sum, Satoh showed that the connected sum of the $n$-twist-spin of a 2-bridge knot with the non-orientable trivial surface-knot of genus 3 and normal Euler number $\pm2$  produces a surface-knot whose triple point number is zero while it is known that these twist-spins have positive triple point number \cite{Satohconnect}. 

Satoh and Shima proved many foundational results on the triple point number in the early 2000's:

\begin{theorem}[Satoh '00 \cite{satoh2}]
 No surface-knot has a triple point number of 1. 
\end{theorem}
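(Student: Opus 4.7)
The strategy is proof by contradiction: suppose $F$ is a surface-knot admitting a broken sheet diagram $D$ with exactly one triple point $v$, and derive a contradiction from the combinatorial and broken-sheet structure of $D$. First I would set up the local picture at $v$. Three sheets $T$, $M$, $B$ meet at $v$, and three double point curves $\gamma_{TM}$, $\gamma_{TB}$, $\gamma_{MB}$ pass through $v$; each curve contributes two arc-germs at $v$, giving six arc-germs in total. Since $v$ is the only triple point in $D$, each arc-germ must travel along its double curve and terminate either at a branch point or close up as a loop returning to $v$. Moreover, because the over/under assignment along a double point curve can change only at a triple point, each global double curve inherits the broken-sheet designation it has at $v$; for example, $\gamma_{TM}$ is a globally $T$-over-$M$ curve.

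Next I would pass to the preimage $\tilde{D}$ of the double point set on $F$. Each double curve lifts to two arcs on $F$ (the upper-sheet preimage and the lower-sheet preimage); the triple point $v$ lifts to three points, one on each of $T$, $M$, $B$, and at each such point two preimage arcs cross, giving a $4$-valent vertex in $\tilde{D}$; each branch point lifts to a point where the two preimage arcs of a double curve merge. I would then study how the three preimages of $v$ are connected by the arcs of $\tilde{D}$, keeping track of the over/under labels that the broken-sheet data forces on each arc.

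The heart of the argument is showing that no connected surface-knot can carry this structure. One plan is algebraic: assign labels to the sheets of $D$ from a suitable small quandle (or a $\mathbb{Z}/2$-coloring coming from the broken-sheet data), propagate these along the double curves, and show that the resulting system of constraints at the six arc-germs of $v$ has no solution compatible with a single triple point. An alternative is combinatorial: exploit the connectedness of $F$ to force certain incidence patterns among the sheets through $v$, and then show that these patterns cannot be realized using only one $4$-valent triple-point vertex on each of $T$, $M$, $B$, together with finitely many branch-point merges and loops.

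The main obstacle will be isolating the correct invariant or parity: the handshake lemma and Euler characteristic applied to $\tilde{D}$ alone are insufficient, because they cannot distinguish exactly one triple point from, say, two. The finer broken-sheet information, namely the over/under labels induced by the height function and how they propagate globally along a double curve in the absence of additional triple points, is what I expect to make the argument work.
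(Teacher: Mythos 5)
The paper does not prove this statement; it is quoted as a known theorem of Satoh with a citation only, so there is no in-paper proof to compare against. Judged on its own terms, your proposal is a setup rather than a proof. The opening moves are correct and are indeed how the published argument begins: at the unique triple point $v$ there are three double point curves and six arc-germs, each germ continues along its double point curve until it reaches a branch point or returns to $v$ (the upper/lower designation being constant along each double point curve), and one passes to the preimage of the double point set (the double decker set) on $F$. But the contradiction is never derived. You say so yourself: the ``heart of the argument'' is presented as two plans you have not carried out, and the ``main obstacle'' is ``isolating the correct invariant or parity.'' That missing invariant is the entire content of the theorem, so as written this is a research plan, not a proof.

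Beyond the incompleteness, the first of your two proposed routes cannot work in the generality required. A quandle or symmetric quandle cocycle weight, as in Lemma~\ref{lem} of this paper, bounds $t(F)$ from below only for a \emph{particular} surface-link that happens to admit a coloring with nonzero weight; every broken sheet diagram also admits trivial colorings of weight zero, and many surface-knots admit no nontrivial colorings by a given finite quandle at all. Hence no single coloring invariant can certify $t(F)\neq 1$ for \emph{all} surface-knots simultaneously, which is what the theorem asserts. The viable route is the second, purely diagrammatic one, and it must actually be executed: one classifies how the six germs at $v$ can be paired by the double point arcs (each arc either joins two germs at $v$ or runs out to a branch point), keeping track of which pair of the three local sheets each arc lies in and of the upper/lower decker structure of its two preimage arcs on $F$, and shows by a parity count that no admissible pairing exists. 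Your sketch names these ingredients but performs no count, so there is a genuine gap where the proof should be.
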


\begin{theorem}[Satoh '01 \cite{satoh3}]
For any positive $n$, there exists a surface-knot whose triple point number is $2n$.

\end{theorem}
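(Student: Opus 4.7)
The plan is to produce, for each positive integer $n$, an explicit surface-knot $F_n$ and then prove $t(F_n) = 2n$ by matching an upper bound from a concrete broken sheet diagram with a lower bound from a quandle 3-cocycle invariant of Carter--Jelsovsky--Kamada--Langford--Saito.

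For the construction of $F_n$, I will look at well-studied families of 2-knots whose triple point numbers are already partially understood, most naturally the $r$-twist-spins $\tau^r K$ of 2-bridge or $(2,p)$-torus knots. These admit motion-picture descriptions from which broken sheet diagrams can be read off, and the number of triple points in the resulting diagram can be expressed as a simple formula in $r$ and a numerical invariant of $K$ (such as the torus-knot parameter $p$). By tuning these parameters I aim to arrange, for each $n$, a representative whose motion picture contributes exactly $2n$ triple points, giving $t(F_n) \leq 2n$.

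For the lower bound I will use the state-sum invariant $\Phi_\theta(D)$ associated to a finite quandle $Q$ and a 3-cocycle $\theta$, where the sum runs over $Q$-colorings of $D$ and the product runs over triple points. Since $\Phi_\theta$ is invariant under Roseman moves it depends only on the underlying surface-knot, and since the contribution of each triple point is a value of $\theta$, the number of distinct monomials appearing in $\Phi_\theta(F_n)$ forces a lower bound on the triple point count of any diagram of $F_n$. The natural candidate is the dihedral quandle $R_p$ paired with the Mochizuki 3-cocycle, whose evaluation on twist-spins of 2-bridge knots can be carried out explicitly from the motion picture.

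The principal obstacle will be hitting \emph{every} positive even integer $2n$, not just an infinite subsequence. A single family of twist-spun torus or 2-bridge knots typically yields triple point numbers in the image of a formula in one or two integer parameters, and that image tends to miss many residues. So the argument will likely need either a second auxiliary family or a stabilization operation that shifts $t$ by $2$ — for example, connected sum with a carefully chosen small surface-knot — together with a verification that the cocycle lower bound remains sharp under that operation. Confirming that no hidden Roseman simplification of the combined diagram can further reduce the triple point count below $2n$ is the technical heart of the proof; the upper-bound step, by contrast, is essentially a careful diagrammatic count.
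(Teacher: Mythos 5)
Your template (diagrammatic upper bound matched by a cocycle lower bound) is reasonable in spirit, but both halves fail for this particular statement, and neither is how Satoh proves it. On the lower-bound side, the CJKLS state sum over a fixed dihedral quandle $R_p$ with the Mochizuki cocycle cannot produce unbounded bounds: the weight of a single coloring is an element of $\mathbb{Z}_p$, so one only learns that enough triple points exist to realize that element as a signed sum of cocycle values, which caps the extractable bound by a constant depending on $p$ alone; counting ``distinct monomials'' does not help either, since there are at most $p$ of them. This is exactly the obstruction that later forced Kamada and Oshiro to pass to symmetric quandle 3-cocycles with values in $\{0,p_i,\pm q_j\}\subset\mathbb{Z}_2^s\oplus\mathbb{Z}^t$, as in Lemma \ref{lem}. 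On the upper-bound side, exact triple point numbers of twist-spins are known only in a handful of cases ($4$, $6$, $8$), each requiring a dedicated paper, and no formula in the twist and bridge parameters is known to be sharp. Worse, the case $n=1$ is unreachable by your family: no 2-knot has triple point number $2$ or $3$ \cite{satoh7}, and twist-spins of classical knots are 2-knots. Your fallback of shifting $t$ by $2$ via connected sum is precisely the operation known to be uncontrollable: Satoh showed that connect-summing a twist-spin of positive triple point number with a trivial non-orientable surface-knot can make the triple point number drop to zero \cite{Satohconnect}.

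Satoh's actual argument in \cite{satoh3} is of a different nature. The examples are built from trivial \emph{non-orientable} pieces of nonzero normal Euler number, and the lower bound comes not from a quandle cocycle but from the geometry of the double point set: non-orientability, $P^2$-irreducibility, and the normal Euler numbers force every diagram to contain a prescribed number of triple points, and this number can be made equal to $2n$ by choosing the pieces appropriately, with a matching motion picture giving the upper bound. If you want a cocycle-based route instead, the fix is the one used for Theorem \ref{thm}: replace $R_p$ and the Mochizuki cocycle by a symmetric quandle 3-cocycle whose nonzero values are $\pm1$ in a $\mathbb{Z}$ summand, so that the weight itself, rather than its residue mod $p$, bounds the triple point count from below.
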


\begin{theorem}[Satoh-Shima '03 \cite{satoh1}]
The 2-twist-spun trefoil has a triple point number of 4.
\end{theorem}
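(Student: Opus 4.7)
The plan is to prove $t(F)=4$ for the $2$-twist-spun trefoil $F$ by separately establishing matching upper and lower bounds. The upper bound is constructive, while the lower bound uses the quandle cocycle state-sum invariant of Carter--Jelsovsky--Kamada--Langford--Saito with Mochizuki's non-trivial $3$-cocycle of the dihedral quandle $R_3$.

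For $t(F)\le 4$, I would exhibit an explicit broken sheet diagram. The $2$-twist-spin admits a movie presentation in which the trefoil rotates twice about an axis and is capped off at both ends. Tracking each of the three classical crossings through the two rotations produces a broken sheet diagram, and after a short simplification via Roseman moves one obtains a diagram with exactly four triple points. This is essentially the diagram appearing in the existing surface-knot literature on twist-spins, and inspection of the motion picture verifies the count.

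For $t(F)\ge 4$, I would compute
\[
\Phi_\theta(F)=\sum_{C}\prod_{\tau}\omega^{\theta(x_\tau,y_\tau,z_\tau)}\in\mathbb{Z}[\omega],
\]
where $\omega=e^{2\pi i/3}$, the sum ranges over $R_3$-colorings $C$ of a fixed broken sheet diagram, the product ranges over its triple points $\tau$ with color triples $(x_\tau,y_\tau,z_\tau)$, and $\theta$ is Mochizuki's cocycle. Evaluating $\Phi_\theta$ on the four-triple-point diagram above yields a specific non-trivial element of $\mathbb{Z}[\omega]$ that distinguishes $F$ from the trivial $2$-knot. Assuming for contradiction that $F$ admits a diagram $D$ with $t(D)\le 3$, Satoh's theorem stated above immediately rules out $t(D)=1$, and the cases $t(D)\in\{2,3\}$ must be excluded by showing that the possible contributions from so few triple points, summed over the nine $R_3$-colorings inherited from the trefoil, cannot reproduce the computed value of $\Phi_\theta(F)$.

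The main obstacle is this last step: the cocycle invariant by itself only guarantees some positive triple point number, so the argument must combine an enumeration of the local color triples $(x_\tau,y_\tau,z_\tau)\in R_3^3$ and their cocycle values $\theta(x_\tau,y_\tau,z_\tau)\in\mathbb{Z}/3$ with the global structure of $R_3$-colorings of $F$ coming from its Alexander-quandle presentation. One must then invoke the algebraic rigidity of short sums of cube roots of unity in $\mathbb{Z}[\omega]$ to conclude that no configuration of at most three triple points can realize the distribution of contributions forced by $\Phi_\theta(F)$. Carrying out this combinatorial exclusion cleanly, so that the sharp bound $t(F)\ge 4$ follows from the invariant rather than from an ad hoc geometric argument, is the technical heart of the proof.
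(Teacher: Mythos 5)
This theorem is quoted in the paper with a citation to Satoh--Shima; the paper contains no proof of it, so there is nothing internal to compare against, and your proposal has to be judged against the cited source. Your overall architecture (explicit diagram for $t(F)\le 4$, Mochizuki's $3$-cocycle $\theta$ of the dihedral quandle $R_3$ for $t(F)\ge 4$) matches Satoh and Shima's, and the upper bound is fine in principle: they do exhibit a motion picture of the $2$-twist-spun trefoil realizing exactly four Reidemeister III moves, hence four triple points.

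The genuine gap is in the lower bound. The cocycle invariant of the $2$-twist-spun trefoil is $3+6\omega$: three trivial colorings contribute $1$ and six nontrivial colorings each contribute $\omega$. But a single triple point carrying weight $+1$ in a nontrivial coloring already produces the contribution $\omega$ for that coloring, so the value $3+6\omega$ is numerically consistent with a diagram having one, two, or three triple points. No ``algebraic rigidity of short sums of cube roots of unity'' can rule these out, because the per-coloring contributions are single powers of $\omega$, not sums; the state-sum value by itself only yields $t(F)\ge 1$ (compare Lemma 4.1 of the present paper, which extracts a linear rather than multiplicative weight precisely so that the count of triple points appears). What Satoh and Shima actually do is a geometric analysis of the singular set of an arbitrary diagram of $F$: they study the double point circles and the double decker set, show how branch points and the $R_3$-colors propagate along each circle, and derive cancellation constraints forcing triple points with nonzero weight to occur in groups; only in combination with the nontriviality of $3+6\omega$ does this force at least four triple points. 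That analysis, which you have not supplied and which your proposed mechanism would not replace, is the actual content of the theorem.
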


\begin{theorem}[Satoh-Shima '05 \cite{satoh6}]
The 3-twist-spun trefoil has a triple point number of 6.
\end{theorem}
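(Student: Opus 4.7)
The plan is to establish the upper bound $t(F) \le 6$ and the lower bound $t(F) \ge 6$ separately, where $F$ denotes the $3$-twist-spun trefoil.

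For the upper bound I would exhibit a broken sheet diagram of $F$ with exactly $6$ triple points. The motion-picture description of the $n$-twist spin of a classical knot $K$ produces a broken sheet diagram in which triple points arise when the crossings of $K$ sweep past one another during the $n$ full twists. Starting from the standard $3$-crossing diagram of the trefoil and performing three full twists, a direct count yields $6$ triple points, giving $t(F) \le 6$.

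The lower bound is the heart of the matter and uses the quandle $3$-cocycle state-sum invariant of Carter--Jelsovsky--Kamada--Langford--Saito. I would color the sheets of a broken sheet diagram by the dihedral quandle $R_3 = \mathbb{Z}/3$ with operation $a * b \equiv 2b - a \pmod{3}$ and weight each triple point by evaluating Mochizuki's $3$-cocycle $\theta \in H^3(R_3; \mathbb{Z}/3)$ on its incident colors. Summing over all $R_3$-colorings produces the invariant $\Phi_\theta(F)$ as a multiset of values in $\mathbb{Z}/3$. The first step is to compute $\Phi_\theta(F)$ from the $6$-triple-point diagram constructed above and to verify that it is a specific nontrivial element whose algebraic structure forces many triple points in any diagram realizing it.

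By Satoh's parity theorem the triple point number of an orientable surface-knot is even, so it remains to rule out $t(F) \in \{0, 2, 4\}$. The cases $0$ and $2$ are quick: zero triple points yield the trivial state sum, while with only two triple points the coloring constraints along shared double curves cannot assemble the contributions into the observed nontrivial value. The main obstacle is the case $t(F) = 4$. Supposing for contradiction that there is a diagram $D$ of $F$ with exactly four triple points, the plan is to enumerate the $R_3$-colorings of $D$, classify how each of the four triple points can contribute a weight in $\mathbb{Z}/3$ to the state sum, and derive a contradiction with the computed value of $\Phi_\theta(F)$. This case analysis, which must combine the algebra of $R_3$ with geometric constraints on how double-point curves can connect triple points inside a broken sheet diagram, is where the real work concentrates; combined with the upper bound it yields $t(F) = 6$.
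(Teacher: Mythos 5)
The paper does not prove this statement; it is quoted from Satoh--Shima \cite{satoh6}, so your proposal can only be judged on its own terms. Your overall architecture (explicit diagram for the upper bound, quandle cocycle weights for the lower bound) is indeed the right genre of argument, but the specific lower-bound tool you chose cannot work. The $3$-twist-spun trefoil admits \emph{no nontrivial colorings by the dihedral quandle $R_3$}. By Litherland's theorem the Alexander module of $\tau^k K$ is $A(K)/(t^k-1)A(K)$; for the trefoil $A(3_1)=\mathbb{Z}[t^{\pm1}]/(t^2-t+1)$, in which $t^3=-1$, so $t^3-1=-2$ acts as multiplication by $-2$ and $A(\tau^3 3_1)\cong \mathbb{Z}_2[t]/(t^2+t+1)$, a group of order $4$ with no $3$-torsion. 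Equivalently, Fox $3$-colorings of $\tau^k(3_1)$ exist only for $k$ even. Hence every $R_3$-coloring of any diagram of $\tau^3(3_1)$ is constant, every triple point has weight $0$, and $\Phi_\theta$ is the trivial value; it yields the lower bound $t(F)\ge 0$ and nothing more. This is precisely why the $R_3$/Mochizuki machinery computes the $2$-twist-spun trefoil but not the $3$-twist-spun one; Satoh and Shima instead color by the order-$4$ Alexander quandle $\mathbb{Z}_2[t]/(t^2+t+1)$ (for which $t^3=1$ holds identically, so trefoil colorings survive $3$-fold twist-spinning) and weight triple points by a cocycle of that quandle.

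Two further gaps remain even after replacing the quandle. First, the ``parity theorem'' you invoke to reduce to the cases $0,2,4$ is not an available result: the present paper explicitly remarks that it is unknown whether triple point numbers must always be even, and Satoh's theorem that no $2$-knot has triple point number $3$ is itself a separate nontrivial result rather than a corollary of parity; so you would also have to exclude $t(F)=5$. Second, and more importantly, the entire content of the lower bound --- showing that no diagram with $4$ (or $5$) triple points can carry a coloring whose weights sum to the observed nontrivial value --- is deferred to a future ``case analysis'' that is never carried out. That analysis, which tracks how double point curves must connect triple points and forces weights to cancel in pairs unless enough triple points are present, is the actual theorem; as written, the proposal restates the goal rather than proving it.
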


\begin{theorem}[Satoh '05 \cite{satoh7}]
No 2-knot has a triple point number of 2 or 3.
\end{theorem}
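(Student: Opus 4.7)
The plan is to argue by contradiction. Suppose $F$ is a 2-knot admitting a broken sheet diagram $D$ with exactly $t\in\{2,3\}$ triple points, and derive a contradiction with $F\cong S^2$.

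The first step is to set up the combinatorics of the singular set. In $\mathbb{R}^3$ the singular set of $D$ is a graph whose vertices are the $t$ triple points (at each of which six arcs of double curves meet) and the branch points (at each of which one arc terminates). Pulling this back via $p|_F$ gives the \emph{double decker set} $\Gamma\subset F$: a graph on $S^2$ with $3t$ degree-$4$ vertices (three preimages for each triple point, each lying on two double decker curves intersecting transversely) together with one degree-$1$ vertex for each branch point. The broken-sheet structure equips $\Gamma$ with labelling data recording which sheet is broken along each arc and the over/middle/under order at each triple point, and orientability of $F$ imposes consistency conditions on this labelling.

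The strategy is then a two-stage case analysis. First I would enumerate the combinatorial types of the singular graph in $\mathbb{R}^3$: namely the possible distributions of double curves into loops at a single triple point, arcs joining two triple points, and arcs terminating at branch points, for the small values $t=2$ and $t=3$. Second, for each such type I would lift to a decker set on $F=S^2$ with all valid broken-sheet labellings, compute $\chi(F)=V-E+R$ from the induced cell decomposition, and check whether the resulting closed surface can actually be an orientable sphere. Here I can freely use the previously cited Satoh '00 theorem that no surface-knot has triple point number $1$: any configuration that admits a local Roseman-type move reducing the triple point count to one can be discarded immediately, so we may assume $D$ is $t$-minimal in that sense.

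The anticipated main obstacle is exhibiting, for every remaining configuration, either a direct violation of $\chi(S^2)=2$ (equivalently, a miscount of regions forcing positive genus) or a concrete local simplification that drops the triple point count. For $t=2$ the enumeration is short, but each candidate requires the broken-sheet consistency to be checked carefully around both triple points and along every double arc; for $t=3$ the number of combinatorial types is substantially larger, and some configurations with three mutually linked triple points and no branch points are combinatorially plausible and must be eliminated individually. The heart of the argument is dispatching these stubborn cases, after which the theorem follows.
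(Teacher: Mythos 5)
This statement is quoted in the paper as a known result of Satoh (reference \cite{satoh7}); the paper supplies no proof of it, so there is nothing internal to compare against. Judged on its own, your proposal is a plan rather than a proof: the entire mathematical content of the theorem is the enumeration and elimination of the candidate configurations for $t=2$ and $t=3$, and your write-up explicitly defers exactly that step (``the heart of the argument is dispatching these stubborn cases, after which the theorem follows''). Announcing a two-stage case analysis without exhibiting the cases, the labellings, or the contradictions leaves the theorem unproven.

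There is also a concrete flaw in the one mechanism you do name for generating contradictions. You propose to ``compute $\chi(F)=V-E+R$ from the induced cell decomposition and check whether the resulting closed surface can actually be an orientable sphere.'' But any graph cellularly embedded in $S^2$ satisfies $V-E+R=2$ automatically; the Euler characteristic of the double decker set is a consequence of $F\cong S^2$, not a constraint that a candidate configuration can violate. A sphere happily supports graphs with $6$ or $9$ degree-$4$ vertices, so this count cannot by itself kill any configuration. The genuine obstructions in Satoh's argument come from finer data: how the upper and lower decker curves pair off into double point circles, how many branch points each double point circle can carry, the over/middle/bottom ordering at triple points, and connectivity of the complementary regions. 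Without identifying which of these is the actual source of contradiction in each case, the proposal does not contain a proof, only a description of where one might look for it.
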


Hatakenaka generalized the method used to compute the triple point number of the 2-twist-spun trefoil to show that the triple point number of the 2-twist-spun figure eight knot is between 6 and 8 and the triple point number of the 2-twist-spun (2,5)-torus knot is between 6 and 12  \cite{hat}. Satoh then returned to the problem to calculate these surface-knots' exact triple point number:

\begin{theorem}[Satoh '16 \cite{satohcocycle}]
The 2-twist-spun figure-eight knot and the 2-twist-spun (2,5)-torus knot have a triple point number of 8.
\end{theorem}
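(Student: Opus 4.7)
The plan is to prove both equalities with the quandle cocycle invariant method, extending Satoh--Shima's argument for the $2$-twist-spun trefoil. Both the figure-eight knot and the $(2,5)$-torus knot have determinant $5$, so the natural pair to use is the dihedral quandle $R_5$ together with Mochizuki's $3$-cocycle $\theta \in H^3(R_5;\mathbb{Z}/5)$, which is already known to detect the $2$-twist-spun trefoil's triple points.

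The upper bound $t \leq 8$ for the $2$-twist-spun figure-eight knot is already Hatakenaka's, so for that surface-knot only the lower bound must be sharpened from $6$ to $8$. For the $2$-twist-spun $(2,5)$-torus knot both bounds require work: I would construct an explicit broken sheet diagram with $8$ triple points by starting from a symmetric movie presentation of the twist-spin built on a minimal crossing diagram of the torus knot and reducing Reidemeister III events via the broken sheet analogues of the Roseman simplifications Hatakenaka applied in the figure-eight case.

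The lower bound is the crux. One evaluates the cocycle state sum $\Phi_\theta(F)$ as a multiset of elements of $\mathbb{Z}/5$ obtained by summing Boltzmann weights at triple points over all $R_5$-colorings of a fixed diagram. Since each triple point contributes one weight per coloring, the naive inequality coming from the number of nontrivial terms of $\Phi_\theta(F)$ only yields $t(F) \geq 6$, which is what Hatakenaka achieves. To push this to $t(F) \geq 8$ one must exploit compatibility: the ordered triple of sheet colors at a triple point determines the weight, so as the coloring varies across the full $R_5$-coloring set the collection of weights appearing at any given triple point is tightly constrained, producing much stronger restrictions than the raw state-sum count.

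The main obstacle is this coloring compatibility bookkeeping. For both surface-knots the task is to verify that no broken sheet diagram with $6$ or $7$ triple points supports a family of $R_5$-colorings whose aggregate Boltzmann weights reproduce the computed $\Phi_\theta(F)$; the $(2,5)$-torus case is the most delicate since its twist-spin admits more nontrivial colorings than the trefoil or figure-eight, so the combinatorics of admissible weight distributions at $7$ triple points must be enumerated and excluded case by case.
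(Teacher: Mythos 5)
This statement is quoted background in the paper: it is attributed to Satoh's 2016 work \cite{satohcocycle} and no proof of it appears anywhere in the present article, so there is no in-paper argument to compare yours against. Judged on its own terms, your proposal correctly identifies the standard toolkit --- both knots have determinant $5$, so the dihedral quandle $R_5$ with Mochizuki's $3$-cocycle is the right invariant, the upper bound for the figure-eight case is already Hatakenaka's, and the $(2,5)$-torus case additionally needs an explicit diagram with $8$ triple points to bring the upper bound down from $12$.

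The genuine gap is that the entire content of the theorem is the step you defer. The naive state-sum count gives only $t \geq 6$, as you note, and everything beyond that is packed into the phrase ``compatibility bookkeeping \ldots must be enumerated and excluded case by case.'' That is a statement of intent, not an argument: you give no mechanism that actually rules out a diagram with $6$ or $7$ triple points. The known arguments of this type (Satoh--Shima for the $2$-twist-spun trefoil, and Satoh's later refinements) do not proceed by enumerating abstract weight distributions over the coloring set; they use geometric input about the singular set --- how triple points are distributed along double point curves, which colors can appear at the ends of those curves (branch points or closed loops), and parity/sign constraints on the triples $(x,y,z)$ occurring as lower, middle, and top colors. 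Without importing constraints of that kind, there is no reason the case analysis you describe would terminate or even be finite in a useful sense, since the diagram itself is not fixed. As written, the proposal is a reasonable research plan but does not constitute a proof of either lower bound, and the construction of the $8$-triple-point diagram for the $(2,5)$-torus knot is likewise only asserted.
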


It is known that no surface-knot of genus one has a triple point number of 2 \cite{ky}. Currently, the only examples of surface-knots with triple point number of two are non-orientable surface-links. The only calculated triple point numbers have been even, although there are too few examples to suggest that the triple point number must always be even.

There are few infinite families of surface-knots with calculated or bounded triple point numbers. Kamada provided the first result of the kind:

\begin{theorem}[Kamada '93 \cite{kamadatriple}] For any positive integer $n$, there exists some 2-knot $S$ such that $t(S)>n$.\label{thm2}
\end{theorem}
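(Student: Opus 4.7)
The plan is to exhibit an explicit family $\{S_k\}$ of 2-knots together with a computable invariant $\nu$ such that $\nu(F) \leq g(t(F))$ for every 2-knot $F$ and some function $g$, while $\nu(S_k) \to \infty$ as $k \to \infty$. This immediately gives $t(S_k) \geq g^{-1}(\nu(S_k)) \to \infty$, so in particular for each $n$ some $S_k$ has $t(S_k)>n$.

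For $\nu$ I would take a quandle cocycle invariant $\Phi_\theta$ associated with a finite quandle $X$ and a 3-cocycle $\theta \in Z^3(X;A)$ valued in a finite abelian group $A$. The structural fact driving the argument is that, given any broken sheet diagram $D$ of $F$ with $t$ triple points, $\Phi_\theta(F)$ is a state sum over $X$-colorings of $D$, each coloring contributing a product of $t$ Boltzmann weights drawn from the finite set $\theta(X^3)\subset A$. Consequently the support of $\Phi_\theta(F)$, viewed as an element of the group ring $\mathbb{Z}[A]$, has at most $|\theta(X^3)|^{t(F)}$ nonzero coefficients, yielding the lower bound $t(F)\ \geq\ \log_{|\theta(X^3)|}|\mathrm{supp}(\Phi_\theta(F))|$.

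For the family I would take $S_k=\tau_2(K_k)$, the 2-twist-spin of a $2$-bridge knot $K_k$ whose determinant grows with $k$, e.g.\ the $(2,2k{+}1)$-torus knot, paired with a Mochizuki 3-cocycle $\theta_k$ of the dihedral quandle $R_{p_k}$ for a prime divisor $p_k$ of $\det(K_k)$. The chart description of twist spins makes $\Phi_{\theta_k}(S_k)$ explicitly computable, and the goal is to show that its support size grows unboundedly in $k$, which combined with the inequality above forces $t(S_k)\to\infty$.

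The hard part is ruling out collapse of the invariant: a priori $\Phi_{\theta_k}(S_k)$ could concentrate at the identity of $A$ for many choices of $X$ and $\theta$, so guaranteeing a nonzero, growing support requires simultaneously tuning the quandle $X$, the cocycle $\theta$, and the knot $K_k$ so that the state sum faithfully reflects a classical invariant of $K_k$ (such as its determinant) that itself diverges. Kamada's original argument avoids cocycle invariants and instead works directly with 2-dimensional braid theory, bounding $t(F)$ below via the complexity of a minimal chart representative; the cocycle approach above is a cleaner modern repackaging of the same underlying counting principle.
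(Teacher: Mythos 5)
First, note that the paper does not prove this statement at all: it is quoted as background from Kamada's 1993 paper, and the text only remarks that Kamada's proof is ``algebraic'' (it proceeds via chart descriptions of $2$-dimensional braids, predating quandle cocycle invariants entirely). So there is no in-paper proof to match; your proposal must stand on its own.

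It does not, because the central inequality is too weak to certify any lower bound larger than a constant for the family you choose. Your bound is $t(F)\geq \log_{|W|}\lvert\mathrm{supp}(\Phi_\theta(F))\rvert$ with $W=\theta(X^3)$. The support is a subset of $A$, so this can never exceed $\log_{|W|}\lvert A\rvert$. For the Mochizuki cocycle on the dihedral quandle $R_{p}$ with values in $A=\mathbb{Z}_{p}$, the image $\theta(X^3)$ has size on the order of $p$, so $\log_{|W|}\lvert A\rvert$ is about $1$: the inequality cannot even rule out $t(S_k)=1$, no matter how the state sum behaves. (Concretely, known computations give $\Phi_\theta(\tau_2 K)$ of the form $p + (\text{terms})$ with support of size $2$ in $\mathbb{Z}_p$, so $\log_{|W|}\lvert\mathrm{supp}\rvert<1$.) The logarithm is the wrong way to extract information from the state sum: the effective bound in this circle of ideas is obtained from a \emph{single} coloring $C$, by noting that its contribution $\sum_\tau \epsilon_\tau\theta(C_\tau)$ is a sum of $t$ terms each drawn from a prescribed small set, so that the ``size'' of that one element of $A$ (e.g.\ $\lvert\beta\rvert$ when the cocycle takes values in $\{0,\pm 1\}\subset\mathbb{Z}$, as in Oshiro's Lemma quoted in Section 4 of this paper) bounds $t$ from below linearly rather than logarithmically. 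Even with that repair, your chosen family is problematic: you would need the quandle, cocycle, and target group to vary with $k$ in a way that makes a single coloring's weight grow without bound, and for $2$-twist-spins of $(2,2k{+}1)$-torus knots no such growth is known (indeed Satoh computed $t=8$ for the $2$-twist-spun $(2,5)$-torus knot, barely more than the trefoil's $4$). Finally, your closing claim that Kamada's argument is ``the same underlying counting principle'' is inaccurate; his 1993 proof bounds triple points via braid monodromy and chart moves and is of a genuinely different character.
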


\noindent Kamada's algebraic proof allows for the addition of trivial handles to generalize the result to connected, orientable surface-knots of any genus.

In 2001, Satoh showed that  non-split 2-component surface-links whose components are trivial, non-orientable, and of arbitrary genus can achieve arbitrarily large triple point number \cite{satoh3}. His lower bound calculation relies on each component being non-orientable, $P^2$-irreducible, and having nonzero normal Euler number. In 2009, Kamada and Oshiro showed a similar result using symmetric quandles  \cite{sym}.  In 2010, Oshiro further explored their method to show that there are non-split 2-component surface-links with both components  trivial and non-orientable whose triple point number can be made arbitrarily large regardless of normal Euler number \cite{oshiro}.

Theorem \ref{thm} generalizes Oshiro's family and calculation by adding trivial components and extending the original quandle coloring:

\begin{theorem}

For any non-negative integers  $k$ and  $m$, there exists a non-split $k+m+1$-component surface-link $F = \cup_{i=1}^k F_i \ \cup \ \cup_{i=1}^m F'_i \ \cup \ G$ such that 

\begin{itemize}

\item[(i)] $F_i$ is trivial and orientable of arbitrary genus $g_i$, 
\item[(ii)] $F'_i$ is trivial and non-orientable of arbitrary even genus $g'_i$,
\item[(iii)]$G$ is trivial and orientable of genus $m+k$, 
\item[(iv)] $F - G$ is a trivial surface-link,
\item[(v)] $t(F)=\sum_{i=1}^m g'_i$.

\end{itemize}

\label{thm}
\end{theorem}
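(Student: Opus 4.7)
The plan is to split the proof into an upper bound $t(F) \le \sum_{i=1}^m g'_i$, obtained from an explicit broken sheet diagram, and a matching lower bound $t(F) \ge \sum_{i=1}^m g'_i$, obtained from a symmetric quandle cocycle invariant. The strategy models Oshiro's $m=1$, $k=0$ case \cite{oshiro} and aims to extend both halves to arbitrary non-negative $k$ and $m$.

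For the upper bound, I would build $F$ as follows. Let $G$ be a standardly embedded trivial orientable surface of genus $m+k$, viewed as the boundary of an unknotted handlebody with $m+k$ one-handles. For each $i = 1, \dots, m$, replace the $i$-th handle of $G$ together with a small neighborhood by a copy of Oshiro's 2-component construction, pairing that trivial orientable handle with a trivial non-orientable component $F'_i$ of genus $g'_i$ in such a way that the broken sheet diagram realizes exactly $g'_i$ triple points. For each $i = 1, \dots, k$, link the $(m+i)$-th handle of $G$ with a trivial orientable $F_i$ of genus $g_i$ by a handle-through-handle Hopf-type linking that requires no triple points. The resulting diagram has $\sum_{i=1}^m g'_i$ triple points, each component separately satisfies (i)--(iii), and $F - G$ is a split union of trivial components because each non-$G$ component is linked only with $G$, giving (iv). Non-splitness of $F$ is automatic from the individual Hopf/Oshiro-style linkings with $G$.

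For the lower bound, I would extend Oshiro's symmetric quandle coloring. Let $(X,\rho)$ be Oshiro's symmetric quandle and $\theta$ her distinguished symmetric $3$-cocycle. Assign the non-trivial Oshiro colors, exactly as in the 2-component case, to the sheets of each $F'_i$ together with the paired $i$-th handle of $G$. Assign a fixed $\rho$-invariant element $e \in X$ (whose contribution to $\theta$ vanishes whenever it appears at a triple point) to the core of $G$, to the remaining $k$ handles of $G$, and to every sheet of the $F_i$. Because every triple point involving $e$ contributes $0$ to the cocycle invariant, the invariant of any diagram of $F$ decomposes as the sum of the $m$ Oshiro invariants for the sub-pairs $(F'_i, \ i\text{-th handle of }G)$. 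Each such Oshiro factor obstructs realizing its sub-pair with fewer than $g'_i$ triple points, so summing gives $t(F) \ge \sum_{i=1}^m g'_i$, matching the upper bound.

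The main obstacle is the global consistency of the coloring at the junctions where the $m$ handles carrying Oshiro colors meet the core of $G$. Since $G$ is connected, the element $e$ assigned to its core must simultaneously satisfy the quandle relations coming from every one of the $m$ Oshiro colorings, so $e$ must be $\rho$-fixed and compatible with every generator used. Verifying that such an $e$ exists in Oshiro's symmetric quandle, or in a mild extension whose cocycle still pulls back to a non-trivial invariant, is the delicate step; once it is in place, the invariant of $F$ genuinely splits as the sum of $m$ independent copies of Oshiro's original invariant and the bound follows.
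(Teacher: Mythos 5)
Your overall architecture---an explicit diagram giving the upper bound and Oshiro's symmetric quandle cocycle giving the lower bound---is exactly the paper's strategy, and your upper-bound construction is essentially the motion picture the paper builds. The genuine gap is in your lower-bound mechanism. You want a $\rho$-fixed element $e\in X$ such that the cocycle vanishes on every triple containing $e$, so that the invariant splits into $m$ independent Oshiro factors. No such element exists in $(P_3,\rho,\theta)$: the only $\rho$-fixed element is $0$, and $0$ appears in \emph{every} triple on which $\theta$ is nonzero, namely $(0,1,0)$, $(0,2,0)$, $(1,0,2)$, $(2,0,1)$, $(1,0,1)$, $(2,0,2)$. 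Worse, $0$ is precisely the color that the connected component $G$ carries as the middle sheet of the contributing triple points (colored $(2,0,2)$ and $(1,0,2)$ in the paper's diagram), so declaring triples through $G$'s color to be invisible would kill the invariant entirely. Passing to an enlarged quandle would require re-verifying the cocycle condition and would still leave you needing $G$'s sheets to carry the color $0$ at exactly the places where the weight is collected.

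The fix is that no splitting or locality argument is needed. Lemma \ref{lem} already does all the work: you only need to exhibit \emph{one} consistent $(P_3,\rho)$-coloring $C$ of your explicit diagram, compute $\theta(F,C)$ on that single diagram, and observe that the value is $0\oplus\sum_{i=1}^m g'_i$ because all $\sum_{i=1}^m g'_i$ triple points contribute $+1$ to the $\mathbb{Z}$-summand with coherent signs (the negative triple points are colored $(2,0,2)$ and the positive ones $(1,0,2)$). Since $\theta(F,C)$ is an invariant of the colored surface-link and each triple point of \emph{any} diagram contributes an element of norm at most one, the lemma gives $t(F)\ge\sum_{i=1}^m g'_i$ directly. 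There is no requirement that the colors on the $F_i$ or on the core of $G$ be ``invisible'' to $\theta$; what must be verified instead is that the coloring of the whole motion picture is consistent still by still, which is the actual content of extending Oshiro's coloring to the added components.
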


 Section \ref{sec:3} provides background information on the weight of symmetric quandle 3-cocycles, Section \ref{sec:4} describes the induced broken sheet diagram of a motion picture,   and Section \ref{sec:7} proves Theorem \ref{thm}.

\section{The Weight of a Symmetric Quandle 3-cocycle}
\label{sec:3}

A {\it quandle}  is a set $X$ with a binary operation $(x,y)\mapsto x^y$ such that 

\begin{enumerate}

\item[(i)] for any $x\in X$, it holds that $x^x=x$,
\item[(ii)] for any $x,y\in X$, there exists a unique $z\in X$ such that $z^y=x$, and 
\item[(iii)] for any $x,y,z\in X$, it holds that $(x^y)^z=(x^z)^{(y^z)}$.

\end{enumerate}

\noindent For a quandle $X$, a {\it good involution $\rho$ of $X$} is an involution such that 

\begin{enumerate}
\item[(i)] for any $x,y\in X$, $\rho(x^y)=\rho(x)^y$, and 
\item[(ii)] for any $x,y\in X$, $x^{\rho(y)}=x^{y^{-1}}$.
\end{enumerate}

\noindent A quandle paired with a good involution is called a {\it symmetric quandle}.

 Let $(X,\rho)$ be a symmetric quandle and $A$ an abelian group. A homomorphism $\phi: \mathbb{Z}(X^3)\to A$ is a {\it symmetric quandle 3-cocyle of $(X,\rho)$} if the following conditions are satisfied: 

\begin{enumerate}
\item[(i)] For any $(a,b,c,d)\in X^4,$ \[ \phi(a,c,d)-\phi(a^b,c,d)-\phi(a,b,d)+\phi(a^c,b^c,d)+\phi(a,b,c)-\phi(a^d,b^d,c^d)=0,
\] 
\item[(ii)] for any $(a,b)\in X^2$, $\phi(a,a,b)=0$ and $\phi(a,b,b)=0$, and
\item[(iii)] for any $(a,b,c)\in X^3$, \[ \phi(a,b,c)+\phi(\rho(a),b,c)=0,\] \[ \phi(a,b,c)+\phi(a^b,\rho(b),c)=0, \] \[\text{and} \quad  \phi(a,b,c)+\phi(a^c,b^c,\rho(c))=0.\]
\end{enumerate}

\noindent For any symmetric quandle $(X,\rho)$, there is an associated chain and cochain complex. Symmetric quandle 3-cocycles are cocycles of this cochain complex and represent cohomology classes of $H^3_{Q,\rho}(X;A)$,  see  \cite{carter2009symmetric}, \cite{kamada2017surface}, \cite{sym}.

 \begin{figure}[h]
\centering
\begin{overpic}[unit=.5mm,scale=.65]{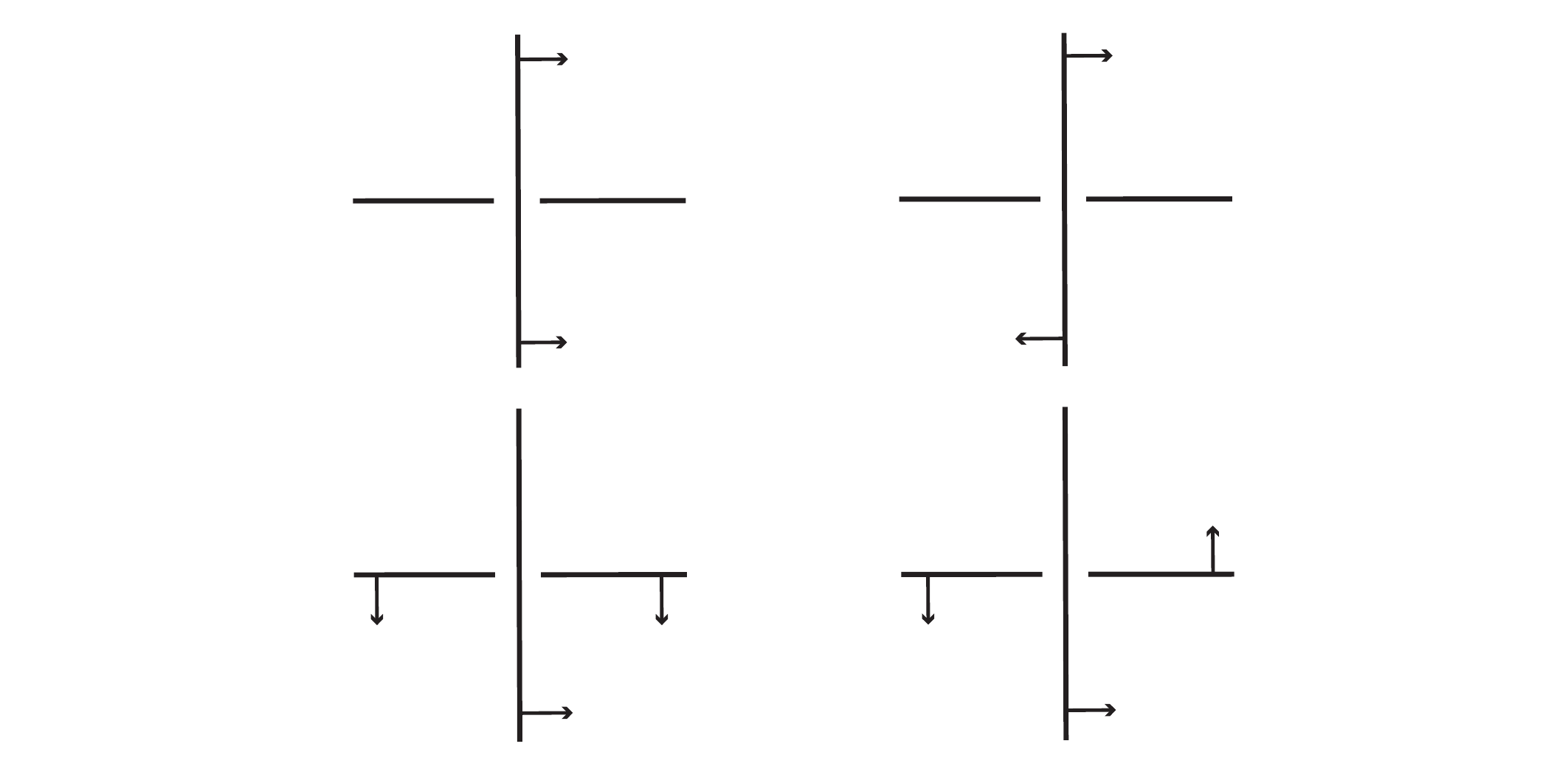}\put(82,115){$x_1$}
\put(82,86){$x_2$}

\put(180,115){$x_1$}
\put(180,86){$x_2$}

\put(115,80){${x_1=x_2}$}\put(213,80){ ${x_1=\rho(x_2)} $}

\put(74,39){$x_1$}\put(108,39){$x_2$}\put(82,17){$x_3$}

\put(170,39){$x_1$}\put(204,39){$x_2$}\put(180,17){$x_3$}

\put(115,11){${x_1^{x_3}=x_2}$}\put(213,11){${x_1^{x_3}=\rho(x_2)}$}

\end{overpic}

\caption{Coloring conditions of a link diagram.}
\label{fig:colorlink}
\end{figure}

Let $L$ be a classical link diagram. Divide over-arcs at each crossing to produce the {\it semi-arcs} of $L$. For a symmetric quandle $(X,\rho)$, an assignment of a normal orientation and $X$ elements to each semi-arc satisfies the {\it coloring conditions} if the following hold:

\begin{enumerate}
\item[(i)] Suppose that the two adjacent semi-arcs coming from an over-arc at a crossing of $L$ are labeled with $X$ elements $x_1$ and $x_2$. If the normal orientations are coherent then $x_1=x_2$, otherwise $x_1=\rho(x_2)$. See the top row of Figure \ref{fig:colorlink}.

\item[(ii)] Suppose that the two under-arcs at a crossing are labeled with $X$ elements $x_1$ and $x_2$, and that one of the semi-arcs coming from the over-arc is  labeled $x_3$ with a normal orientation pointing toward the under-arc labeled with $x_2$.  If the normal orientations of the under-arcs are coherent, then $x_1^{x_3}=x_2$, otherwise $x_1^{x_3}=\rho(x_2)$. See the bottom row of Figure \ref{fig:colorlink}.

\end{enumerate}

An {\it $(X,\rho)$-coloring} of $L$ is the equivalence class of an assignment of normal orientations and elements of $X$ to the semi-arcs of $L$ satisfying the coloring conditions. The equivalence relation is generated by {\it basic inversions}. Such an inversion reverses the normal orientation of a semi-arc and changes the assigned element $x$ to $\rho(x)$.

Let $D$ be a broken sheet diagram. Divide over-sheets along the double point curves and call the result {\it semi-sheets} of $D$. Note that every semi-sheet is orientable even if $F$ is non-orientable. For a symmetric quandle $(X,\rho)$, an assignment of a normal orientation and an element of $X$ to each semi-sheet satisfies the {\it coloring conditions} if the following hold:

\begin{figure}[h] 
\centering
\begin{overpic}[unit=.435mm,scale=.7]{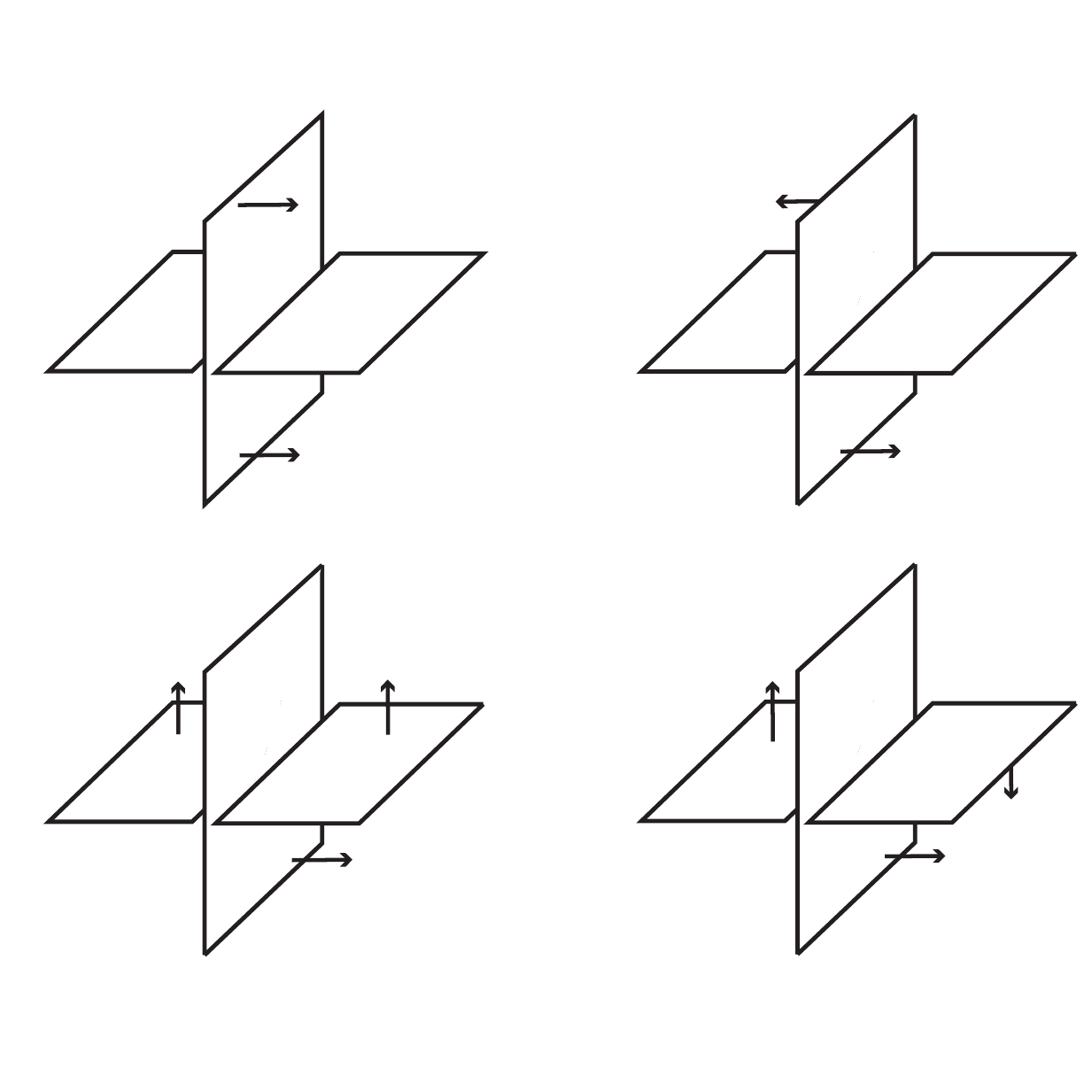}  \put(43,123){$x_1$}  \put(44,152){$x_2$} \put(151,122){$x_1$}  \put(153,152){$x_2$} 
 \put(27.5,32){$S_3$}  \put(150,39){$x_3$} 
 \put(15,42){$S_1$}  \put(134,56){$x_1$} 
  \put(70,46){$S_2$}  \put(169,57){$x_2$} 
  \put(67,107){${x_1=x_2}$}  \put(176,107){${x_1=\rho(x_2)}$}
  
  \put(67,20){$x_1^{x_3}=x_2$}  \put(176,20){$x_1^{x_3}=\rho(x_2)$}
  
  \put(25,56){$x_1$}  \put(125,42){$S_1$}
  
    \put(58,56){$x_2$}  \put(179,46){$S_2$}
    
        \put(41,39){$x_3$}  \put(137,32){$S_3$}
\end{overpic}
\caption{Coloring conditions of a broken sheet diagram.}
\label{fig:sheet}
\end{figure}

\begin{enumerate}
\item[(i)] Suppose that two adjacent semi-sheets coming from an over-sheet of $D$ about a double point curve are labeled by $x_1$ and $x_2$. If the normal orientations are coherent then $x_1=x_2$, otherwise $x_1=\rho(x_2)$. See the top row of Figure \ref{fig:sheet}.

\item[(ii)] Suppose that two adjacent semi-sheets $S_1$ and $S_2$ coming from under-sheets about a double point curve are labeled by $x_1$ and $x_2$, and that one of the two semi-sheets coming from an over-sheet of $D$, say $S_3$, is labeled by $x_3$. Assume that the normal orientation of $S_3$ points from $S_1$ to $S_2$. If the normal orientations of $S_1$ and $S_2$ are coherent, then $x_1^{x_3}=x_2$, otherwise $x_1^{x_3}=\rho(x_2)$. See the bottom row of Figure \ref{fig:sheet}.

\end{enumerate}

An {\it $(X,\rho)$-coloring} of $D$ is the equivalence class of an assignment of normal orientations and elements of $X$ to the semi-sheets of $D$ satisfying the coloring conditions. The equivalence relation is generated by {\it basic inversions}. Such an inversion reverses the normal orientations of a semi-sheet and changes the assigned element $x$  to $\rho(x)$ \cite{sym}, \cite{oshiro}.

 Let $C$ be an $(X,\rho)$-coloring of a broken sheet diagram $D$. For a triple point $\tau$ of $D$, choose one of the eight 3-dimensional complementary regions around the triple point and call the region  {\it specified}. There are 12 semi-sheets around a triple point. Let $S_B$, $S_M$, and $S_T$ be the three of them that face the specified region, where $S_B$, $S_M$, and $S_T$ are semi-sheets of the bottom, middle, and top sheet respectively. Let $n_B$, $n_M$, and $n_T$ be the normal orientations of $S_B$, $S_M$, and $S_T$. Through basic inversions, it is assumed that each normal orientation points away from the specified region. Let $x,y,$ and $z$ be the elements of $X$ assigned to the semi-sheets $S_B$, $S_M$, and $S_T$ whose normal orientations $n_B$, $n_M$, and $n_T$ point away from the specified region. The {\it color} of the triple point $\tau$ is the triple $C_\tau=(x,y,z).$ 
 
\begin{figure}[h]
\centering
\begin{overpic}[unit=.35mm,scale=.35]{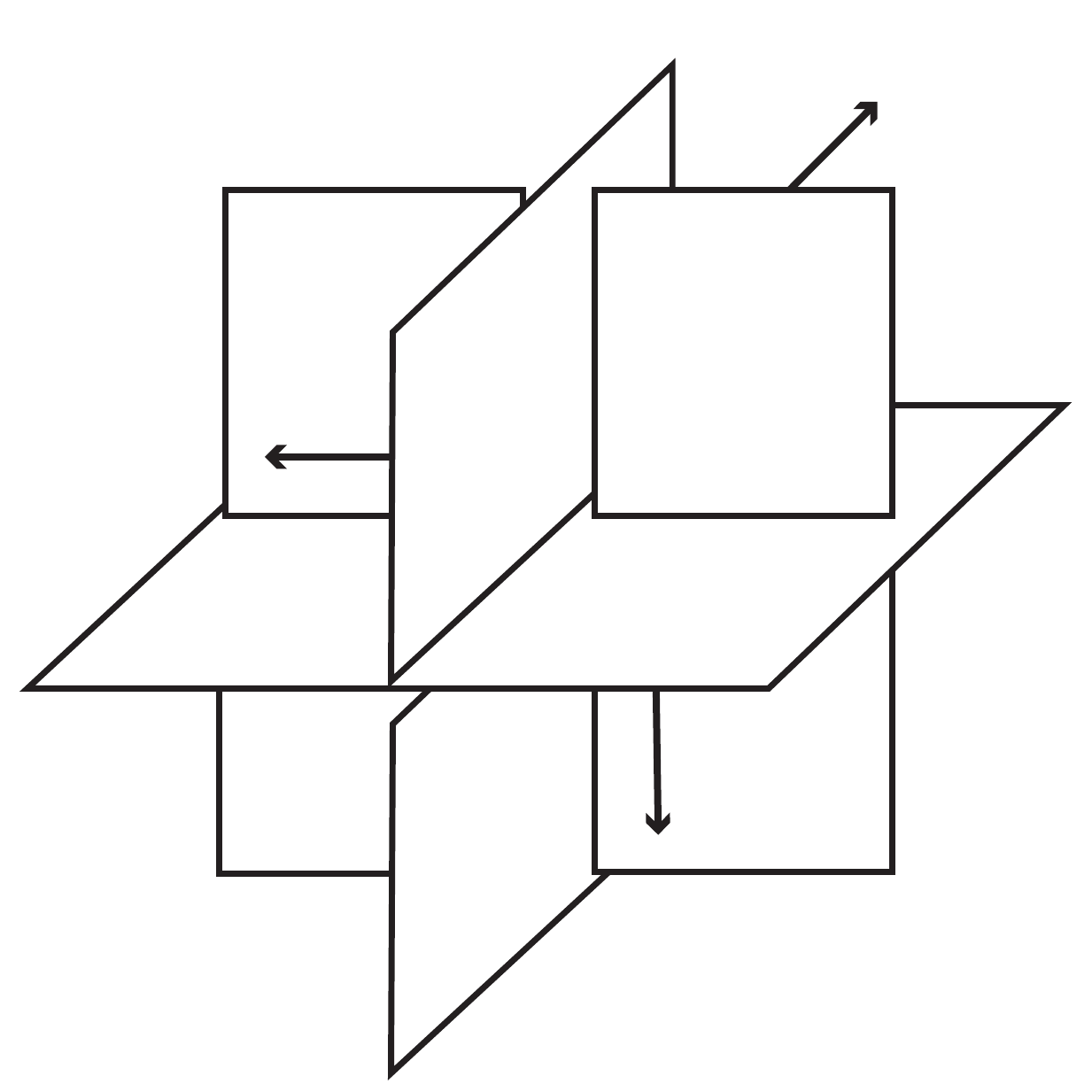}\put(82,82.5){$x$}  \put(56,77){$y$}  
\put(74,55){$z$}
\end{overpic}
\caption{A positive colored triple point with $C_\tau=(x,y,z)$.}
\label{fig:spec}
\end{figure}

 Let $\phi:\mathbb{Z}(X^3)\to A$ be a symmetric quandle 3-cocycle of $(X,\rho)$. The ${\it \phi-weight}$ of the triple point is defined by $\epsilon \phi(x,y,z)$ such that $\epsilon$ is +1 (or -1) if the triple of the normal orientations $(n_T,n_M,n_B)$ is (or is not) coherent with the orientation of $\mathbb{R}^3$ at the triple point. The triple point of Figure \ref{fig:spec} is positive. The ${\it \phi-weight}$ of a diagram $D$ with respect to a symmetric quandle coloring $C$ is \[ \phi(D,C)=\sum_\tau (\phi-\text{weight \ of \ $\tau$})\in A,\] where $\tau$ runs over all triple points of $D$. The value $\phi(D,C)$ is an invariant of an $(X,\rho)$-colored surface-link $(F,C)$ \cite{sym}, \cite{oshiro}. Denote $\phi(D,C)$ by $\phi(F,C)$.

\section{Induced Broken Sheet Diagram of a Motion Picture}
\label{sec:4}

Given a surface-knot $F\subset \mathbb{R}^4$ and a vector ${\bf v}\in \mathbb{R}^4$, perturb $F$ such that the orthogonal projection of $\mathbb{R}^4$ onto $\mathbb{R}$ in the direction of ${\bf v}$ is a Morse function. For any $t\in \mathbb{R}$, let $\mathbb{R}_t^3$ denote the affine hyperplane orthogonal to ${\bf v}$ that contains the point $t {\bf v}$.  Morse theory allows for the assumption that all but finitely many of the non-empty cross-sections $F_t=\mathbb{R}_t^3 \cap F$ are classical links. The decomposition $\{F_t\}_{t\in\mathbb{R}}$ is called a {\it motion picture} of $F$. It may also be assumed that the exceptional cross-sections contain minimal points, maximal points, and/or immersed links with double points representing saddles \cite{CKS}, \cite{kamada2017surface}.

\begin{figure}[h]
 \centering
\begin{overpic}[unit=.399mm,scale=.6]{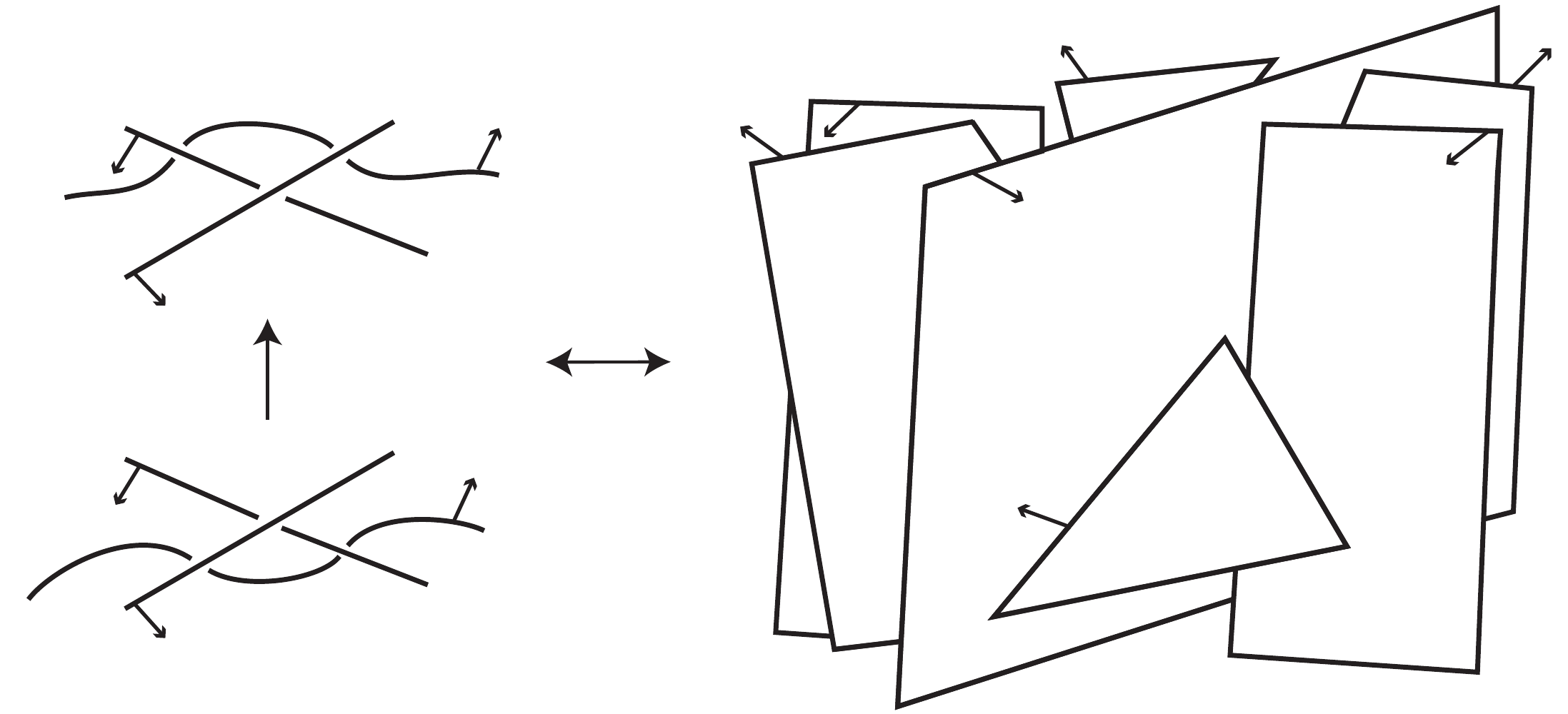}\put(26,26){$z$}\put(10,34){$x^y$}
\put(32,55){$y$}\put(32,126){$y$}\put(12,103){$x^y$}\put(28,98){$z$}\put(89,118){$x^z$} 
\put(54,129){$x$}\put(242,130){$x$}\put(70,95){$y^z$}\put(80,20){$y^z$}\put(51,20){$(x^y)^z$}

\put(184,125){$y$}\put(180,83){$x^y$}\put(233,79){$z$}\put(249,45){$(x^y)^z$}\put(291,71){$y^z$}\put(299,128){$x^z$}\put(85,44){$x^z$}
\end{overpic}

\caption{Relationship between Reidemeister III moves and triple points.}
\label{fig:r3}

 \end{figure}
 
There is a product structure between Morse critical points implying that only finitely many cross-sections are needed to decompose, or construct, $F$. Although, a sole cross-section of a product region does not uniquely determine its knotting, ambient isotopy class relative boundary, see \cite{CKS}. Project the cross-sections $\{F_t\}_{t\in\mathbb{R}}$  onto a plane to get an ordered family of planar diagrams containing classical link diagrams, minimal points, maximal points, and link diagrams with transverse double points. These planar diagrams are {\it stills} of the motions picture. The collection of all stills will also be referred to as a motion picture. The double points of the immersed link diagrams can be replaced with bands to give information about the double points' smoothings  in the stills immediately before and after.

 \begin{figure}[ht]

\includegraphics[scale=.5]{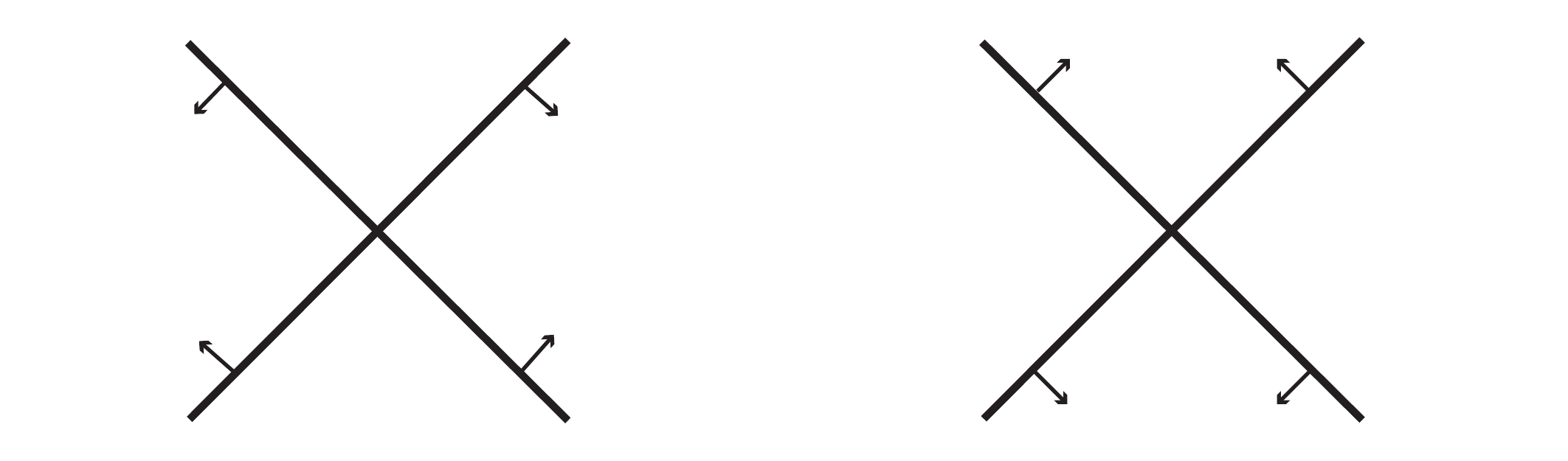}

\caption{Orientation at a saddle point.}
\label{fig:markedv}
\end{figure}

The product structure between critical points also implies that cross-sections between consecutive critical points represent the same link. Therefore, there is a sequence of Reidemeister moves and planar isotopies between the stills of a motion picture that exists between consecutive critical points. 

A motion picture induces a broken sheet diagram. Associate the time parameter of a Reidemeister move with the height of a local broken sheet diagram. A translation of each Reidemeister move to a broken sheet diagram is done in  \cite{kamadakim}. A Reidemeister III move gives a triple point diagram seen in Figure \ref{fig:r3}, a Reidemeister I move corresponds to a branch point, and a Reidemeister II move corresponds to a maximum or minimum of a double point curve, Figures 5 \& 6 of \cite{kamadakim}.

\noindent The triple points of the induced broken sheet diagram are in corresponds with the Reidemeister III moves between stills.

For a symmetric quandle $(X,\rho)$, an $(X,\rho)$-coloring of an immersed link diagram with transverse double points is an assignment of $X$ elements to each arc such that the symmetric coloring conditions are satisfied at each crossing, each of the four arcs at a double point are given the same color, and the normal orientations of the arcs at a double point satisfy Figure \ref{fig:markedv}. Geometric justification of the coloring constraints at a double point, as well as an example of replacing a double point with a band, is shown in Figure \ref{fig:saddle}.

 \begin{figure}[h]
\centering
\includegraphics[scale=.65]{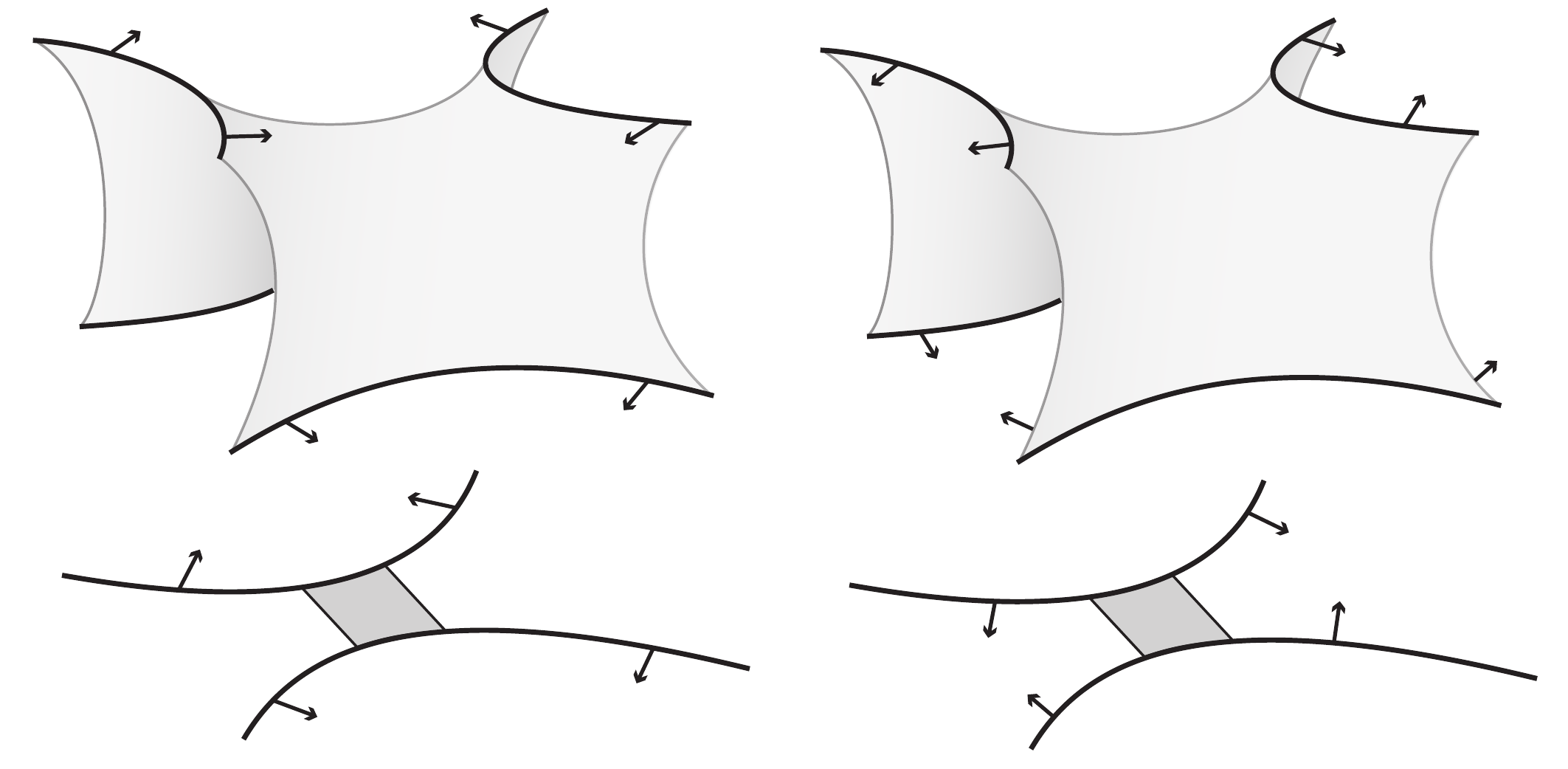}

\caption{Induced saddle sheet.}
\label{fig:saddle}
\end{figure}

An $(X,\rho)$-coloring of a motion picture is a {\it consistent} $(X,\rho)$-coloring of each still. Consistent means that stills separated by a Reidemeister move, or planar isotopy, have colorings consistent with the unique coloring extension of the move. An $(X,\rho)$-coloring of a motion picture gives an $(X,\rho)$-coloring of the induced broken sheet diagram. Give each induced sheet the same color as any arc that traces the sheet in the motion picture. With the addition of the appropriately colored saddle sheets,  an $(X,\rho)$-coloring of the entire broken sheet diagram is achieved.

\section{Proof of Theorem \ref{thm}}
\label{sec:7}

For non-negative integers $s$ and $t$, let $A_{s,t}$ denote the direct sum of $s$ copies of $\mathbb{Z}_2$ and $t$ copies of $\mathbb{Z}$, $A_{s,t}=(\mathbb{Z}_2)^s\oplus(\mathbb{Z})^t$. Every element of $A_{s,t}$ is of the form $(\alpha_1 \oplus \cdots \oplus \alpha_s) \oplus ( \beta_1\oplus \cdots \oplus \beta_t)$, where $\alpha_i$ is an entry of the $i$th copy of $\mathbb{Z}_2$ and $\beta_j$ is an entry of the $j$th copy of $\mathbb{Z}$. Let $p_i$ and $q_j$ be the elements of $A_{s,t}$ whose entries are all zeros except $\alpha_i=1$ and $\beta_j=1$.

Consider a broken sheet diagram realizing the triple point number of the surface-knot it represents. If a symmetric quandle 3-cocycle has $\mathbb{Z}$ coefficients and only takes the values $1, -1$ or 0, then the absolute value of the cocycle's weight cannot be greater than the number of triple points in the diagram. Since the weight of a symmetric quandle 3-cocycle is an invariant, the triple point number bounds the weight of the cocycle. This is the principle of the following lemma.

\begin{lemma}[Oshiro '10 \cite{oshiro}]

Let $(X,\rho)$ be a symmetric quandle, and let $\phi: \mathbb{Z}(X^3)\to A_{s,t}$ be a 3-cocycle of $(X,\rho)$ such that for any generator $(a,b,c)\in X^3$ of $\mathbb{Z}(X^3)$ it holds that \[ \phi(a,b,c)\in\{0,p_i,\pm q_j \}.\]

If the invariant $\phi(F,C)$ of a surface-knot $F$ with an $(X,\rho)$-coloring $C$ is equal to $(\alpha_1 \oplus \cdots \oplus \alpha_s) \oplus ( \beta_1\oplus \cdots \oplus \beta_t)$, then we have $t(F)\geq \sum_{i=1}^s\alpha_i + \sum_{i=1}^t|\beta_j|$ where the sum is taken in $\mathbb{Z}$ by regarding $\alpha_k=0$ or 1 as an element of $\mathbb{Z}$.

 \label{lem} \end{lemma}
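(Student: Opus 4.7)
The plan is to bound $t(F)$ using the cocycle invariant $\phi(F,C)$ by carefully counting triple points according to which coordinate of $A_{s,t}$ they contribute to. Fix a broken sheet diagram $D$ of $F$ with exactly $t(F)$ triple points, and let $C$ also denote the pulled-back coloring on $D$, so that $\phi(D,C)=\phi(F,C)$ by invariance.

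First I would classify the triple points of $D$ by their $\phi$-weight. At each triple point $\tau$ the $\phi$-weight is $\epsilon_\tau\,\phi(C_\tau)\in A_{s,t}$ where $\epsilon_\tau=\pm 1$. By hypothesis $\phi(C_\tau)\in\{0,p_i,\pm q_j\}$, and since $p_i$ has order $2$ we have $-p_i=p_i$. Thus every triple point contributes one of
\[
0,\ p_1,\ldots,p_s,\ \pm q_1,\ldots,\pm q_t.
\]
Let $N_i$ be the number of triple points with weight $p_i$, let $M_j^+$ and $M_j^-$ be the numbers with weight $+q_j$ and $-q_j$ respectively, and let $N_0$ be the number with weight $0$. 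Then
\[
t(F) \;=\; N_0 + \sum_{i=1}^{s} N_i + \sum_{j=1}^{t}(M_j^+ + M_j^-).
\]

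Next I would compare with the invariant. Summing the weights gives
\[
\phi(F,C) \;=\; \Bigl(\bigoplus_{i=1}^{s}(N_i \bmod 2)\Bigr) \oplus \Bigl(\bigoplus_{j=1}^{t}(M_j^+ - M_j^-)\Bigr),
\]
so by comparing with the assumed expression for $\phi(F,C)$ we read off $\alpha_i \equiv N_i \pmod 2$ and $\beta_j = M_j^+ - M_j^-$. In particular, regarding $\alpha_i\in\{0,1\}\subset\mathbb{Z}$, we get $\alpha_i\le N_i$, and by the ordinary triangle inequality $|\beta_j|\le M_j^+ + M_j^-$.

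Finally I would assemble the inequality:
\[
\sum_{i=1}^{s}\alpha_i + \sum_{j=1}^{t}|\beta_j|
\;\le\; \sum_{i=1}^{s} N_i + \sum_{j=1}^{t}(M_j^+ + M_j^-)
\;\le\; t(F),
\]
which is exactly the claimed bound. There is no real obstacle here; the only subtle points are the sign-absorption $-p_i=p_i$ in the $\mathbb{Z}_2$ summands and remembering that, in $\mathbb{Z}_2$, passing from $\alpha_i$ to an integer bound on $N_i$ costs nothing because $\alpha_i\in\{0,1\}$. Everything else is bookkeeping supported by the fact that $\phi(D,C)$ is an $(X,\rho)$-coloring invariant established in Section \ref{sec:3}.
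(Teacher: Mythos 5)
Your proof is correct and follows essentially the same approach the paper sketches in the paragraph preceding the lemma (and which it attributes to Oshiro): since each triple point contributes at most one of $0$, $p_i$, or $\pm q_j$ to the invariant, the number of triple points in a minimal diagram bounds the ``total size'' of $\phi(F,C)$. Your write-up just makes the bookkeeping explicit, including the correct observations that $-p_i=p_i$ in the $\mathbb{Z}_2$ summands and that $\alpha_i\le N_i$, $|\beta_j|\le M_j^++M_j^-$.
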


Let $P_3$ be the quandle whose multiplication table is shown in Table \ref{tab:2}. The involution $\rho: P_3\to P_3$ defined by $\rho(0)=0$ and $\rho(1)=2$ is a good involution of $P_3$ \cite{oshiro}.

\begin{table}[h]
\centering
\begin{tabular}{c|c c c  }

$P_3$ & 0&1&2 \\\hline 
 0 & 0 &0&0\\ 
1 & 2&1&1\\
2 & 1 &2&2\\

\end{tabular}
\vspace{4mm}
\caption{Multiplication table of $P_3$.}
\label{tab:2}
\end{table}

Define a map $\theta:P_3^3\to\mathbb{Z}_2\oplus\mathbb{Z}$ such that 
\[   \theta(a,b,c)=\left\{
\begin{array}{ll}
      1\oplus 0 & \quad (a,b,c)\in\{(0,1,0),(0,2,0)\}, \\
      0\oplus 1 & \quad (a,b,c)\in\{(1,0,2),(2,0,1)\},  \\
     0\oplus -1 & \quad (a,b,c)\in\{(1,0,1),(2,0,2)\},  \\
      0\oplus0 & \quad \text{otherwise.} \\
\end{array} 
\right. \]
The linear extension $\theta:\mathbb{Z}(P_3^3)\to\mathbb{Z}_2\oplus\mathbb{Z}$ is a symmetric quandle 3-cocycle of $(P_3,\rho)$ \cite{oshiro}. This cocycle satisfies the assumptions of Lemma \ref{lem}, so the $\theta$-weight of a $(P_3,\rho)$-colored broken sheet diagram is a lower bound on the surface-knot's triple point number.

\begin{figure}[h]
\centering
\begin{overpic}[unit=.5mm,scale=.7]{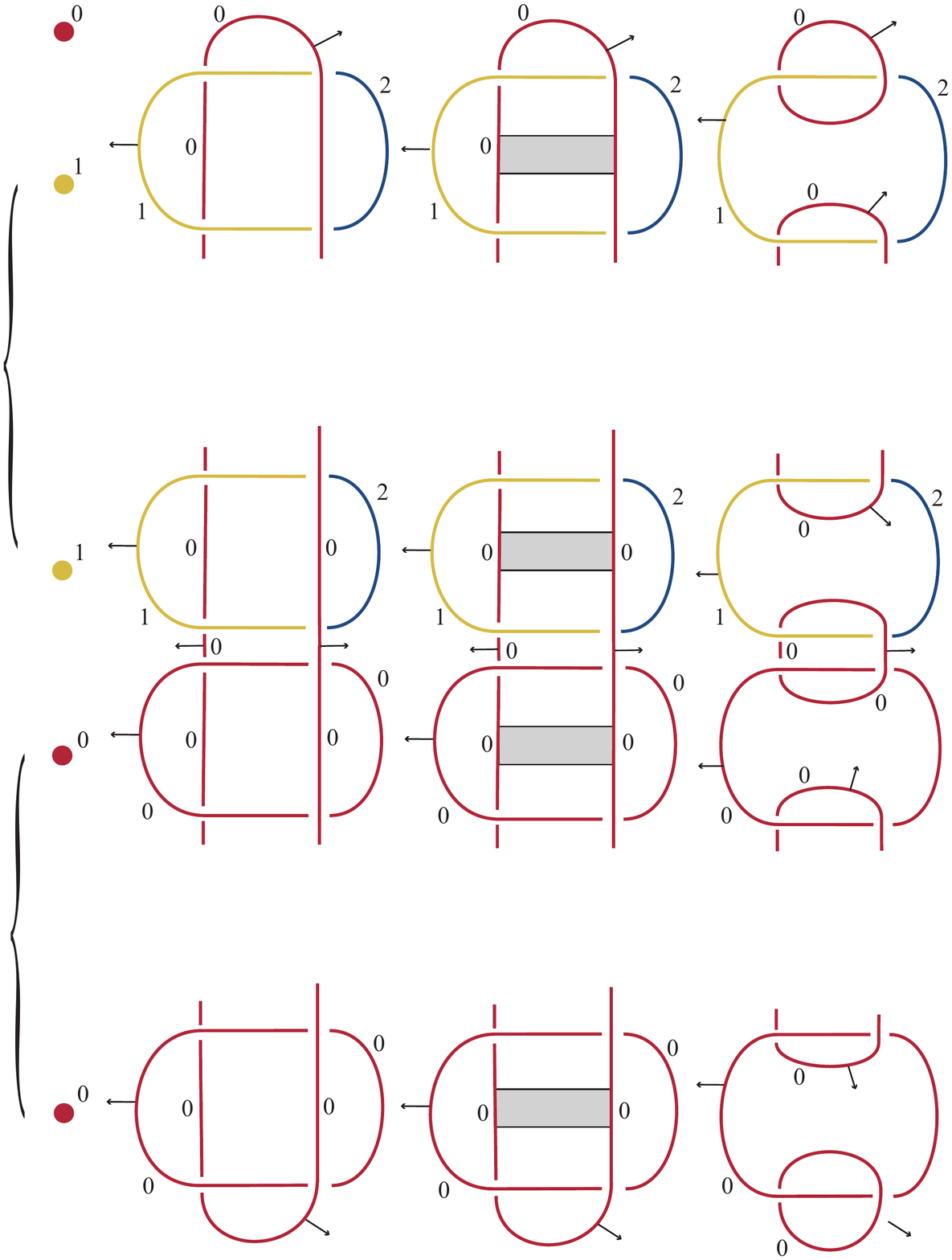}
\put(-8,273){$m$}\put(-7,99){$k$}
\put(80,276){\vdots}
\put(20,276){\vdots}

\put(169,276){\vdots}\put(255,276){\vdots}
\put(169,100){\vdots}\put(255,100){\vdots}\put(79,100){\vdots}\put(20,100){\vdots}
\put(12,-20){(min)}\put(75.5,-20){(i)}\put(164,-20){(ii)}\put(250,-20){(iii)}

\end{overpic}

\vspace{2cm}

\caption{$(P_3,\rho)$-colored motion picture of $F$, 1 of 5.} 
\label{fig:motion1}
\end{figure}

\begin{figure}[h]
\centering

\begin{overpic}[unit=.5mm,scale=.7]{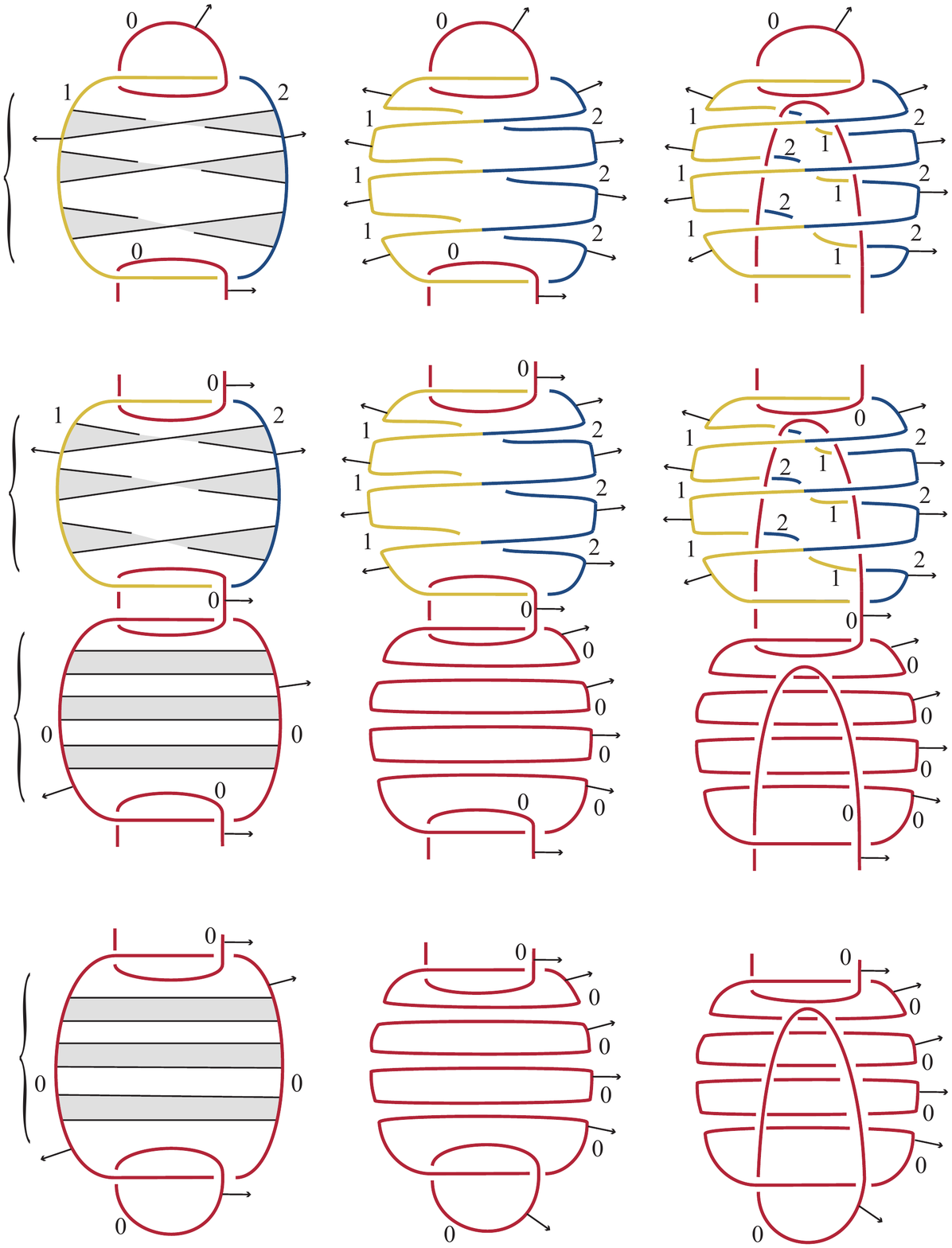}
\put(-7,329){$\frac{g'_1}{2}$} \put(-7,233){$\frac{g'_m}{2}$}\put(-3,166){$g_1$}\put(-3,60){$g_k$}

\put(54,281){\vdots}

\put(150,281){\vdots}

\put(249,281){\vdots}

\put(53,113){\vdots}

\put(149,107){\vdots}

\put(249,104){\vdots}

\put(47,-20){(iv)}\put(146,-20){(v)}\put(244,-20){(vi)}

\end{overpic}

\vspace{2cm}

\caption{$(P_3,\rho)$-colored motion picture of $F$, 2 of 5.}

\label{fig:2}

\end{figure}

\begin{figure}[h]
\centering
\begin{overpic}[unit=.5mm,scale=.7]{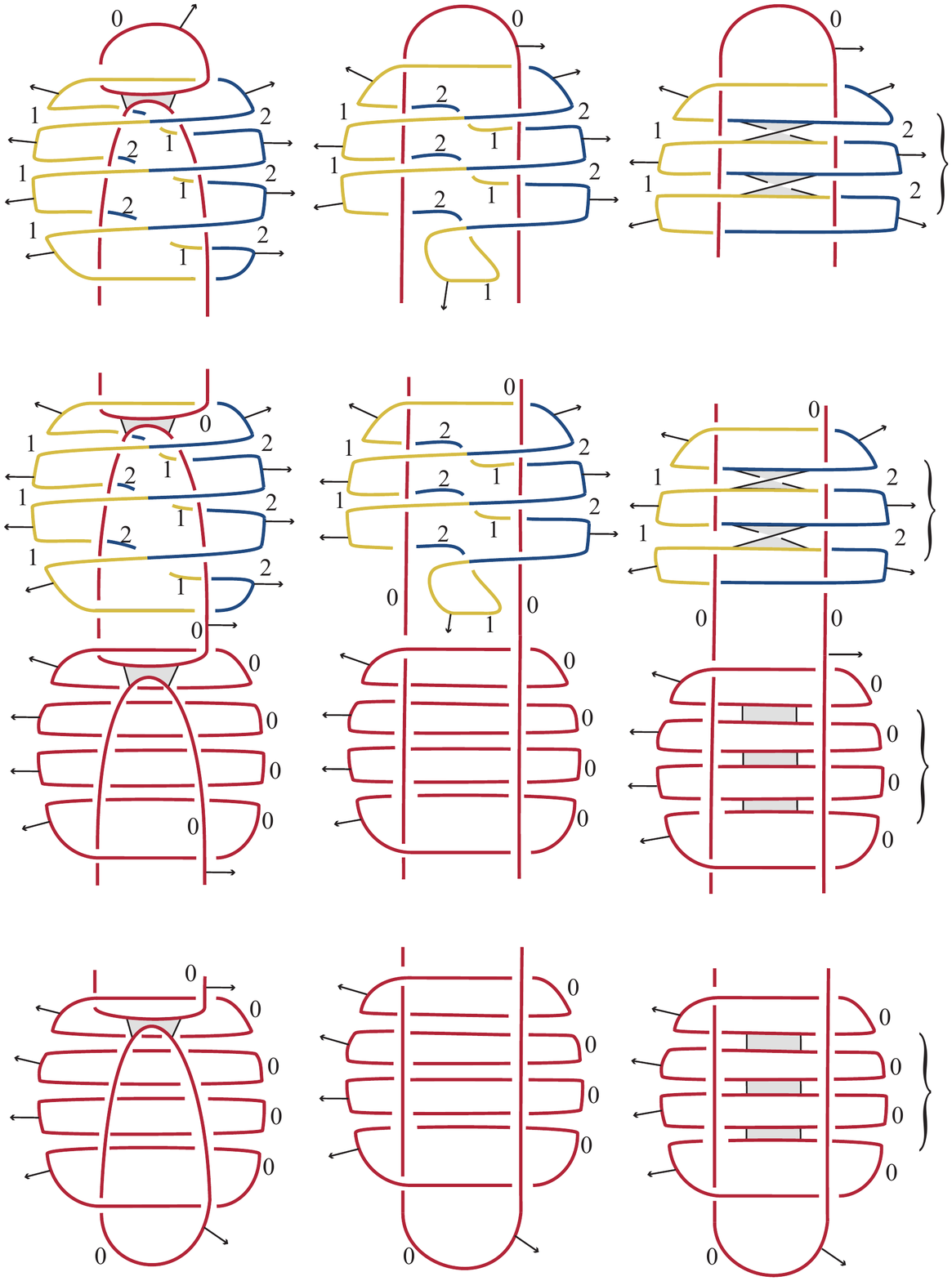}

\put(300,337){$\frac{g'_1}{2}-1$}\put(297,233){$\frac{g'_m}{2}-1$}
\put(295,157){$g_1$}\put(295,59){$g_k$}

\put(50,-20){(vii)}\put(142,-20){(viii)}\put(237,-20){(ix)}

\put(56,282){\vdots}
\put(150,282){\vdots}

\put(244,287){\vdots}

\put(55,105){\vdots}\put(150,109){\vdots}\put(243,105){\vdots}

\end{overpic}

\vspace{2cm}

\caption{$(P_3,\rho)$-colored motion picture of $F$, 3 of 5.}
\label{fig:3}
\end{figure}

\begin{figure}[h]

\centering

\begin{overpic}[unit=.5mm,scale=.7]{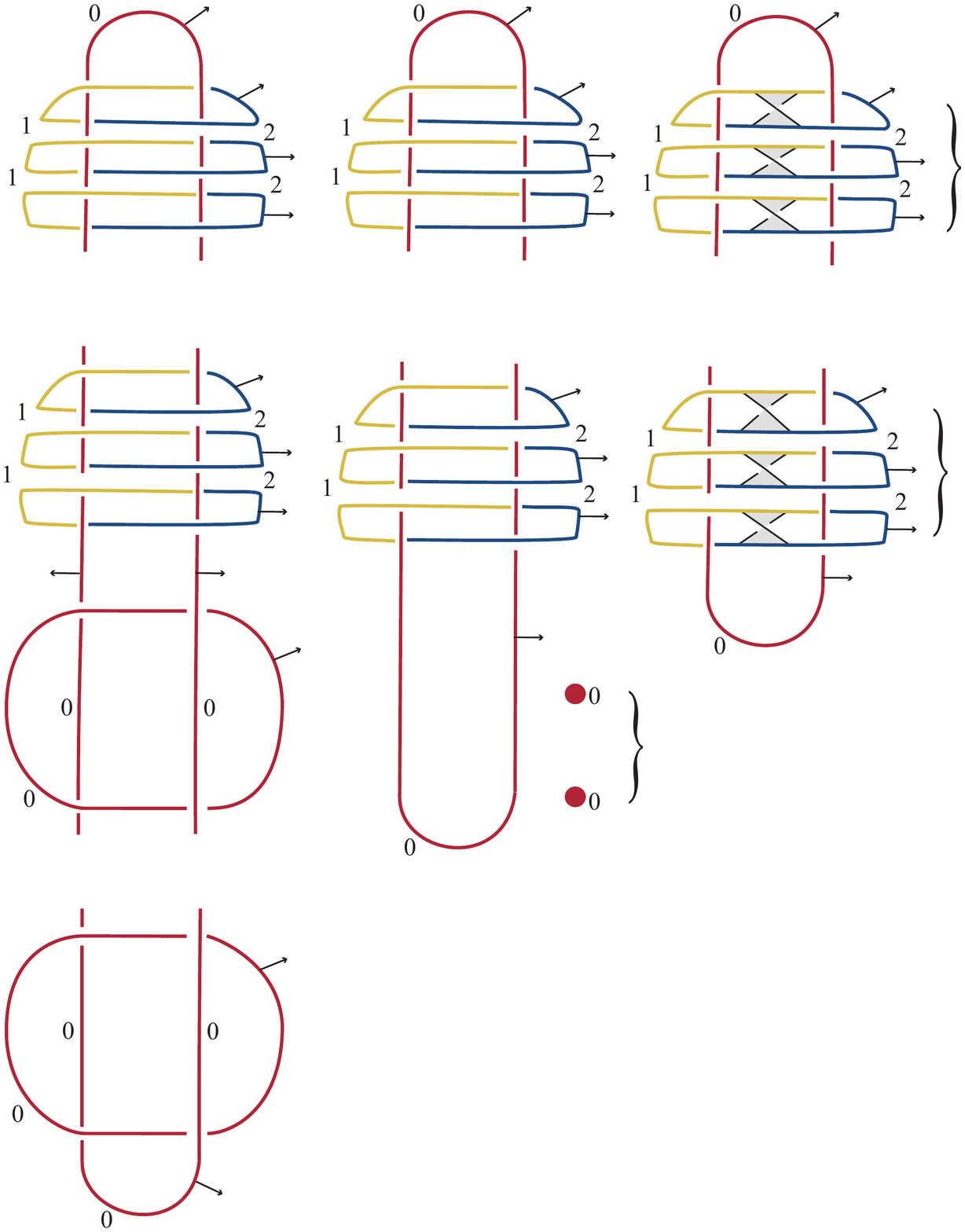}

\put(204,157){$k$}

\put(298,328){$\frac{g'_1}{2}$} \put(296,238){$\frac{g'_m}{2}$}

\put(51,287){\vdots}\put(146,287){\vdots}\put(240,287){\vdots}\put(49,119){\vdots}
\put(179,157){\vdots}

\put(46,-20){(x)}\put(130,-20){(xi) / (max)}\put(230,-20){(xii)}

\end{overpic}

\vspace{2cm}

\caption{$(P_3,\rho)$-colored motion picture of $F$, 4 of 5.}
\label{fig:4}
\end{figure}

\begin{figure}[h]
\centering

\begin{overpic}[unit=.5mm,scale=.7]{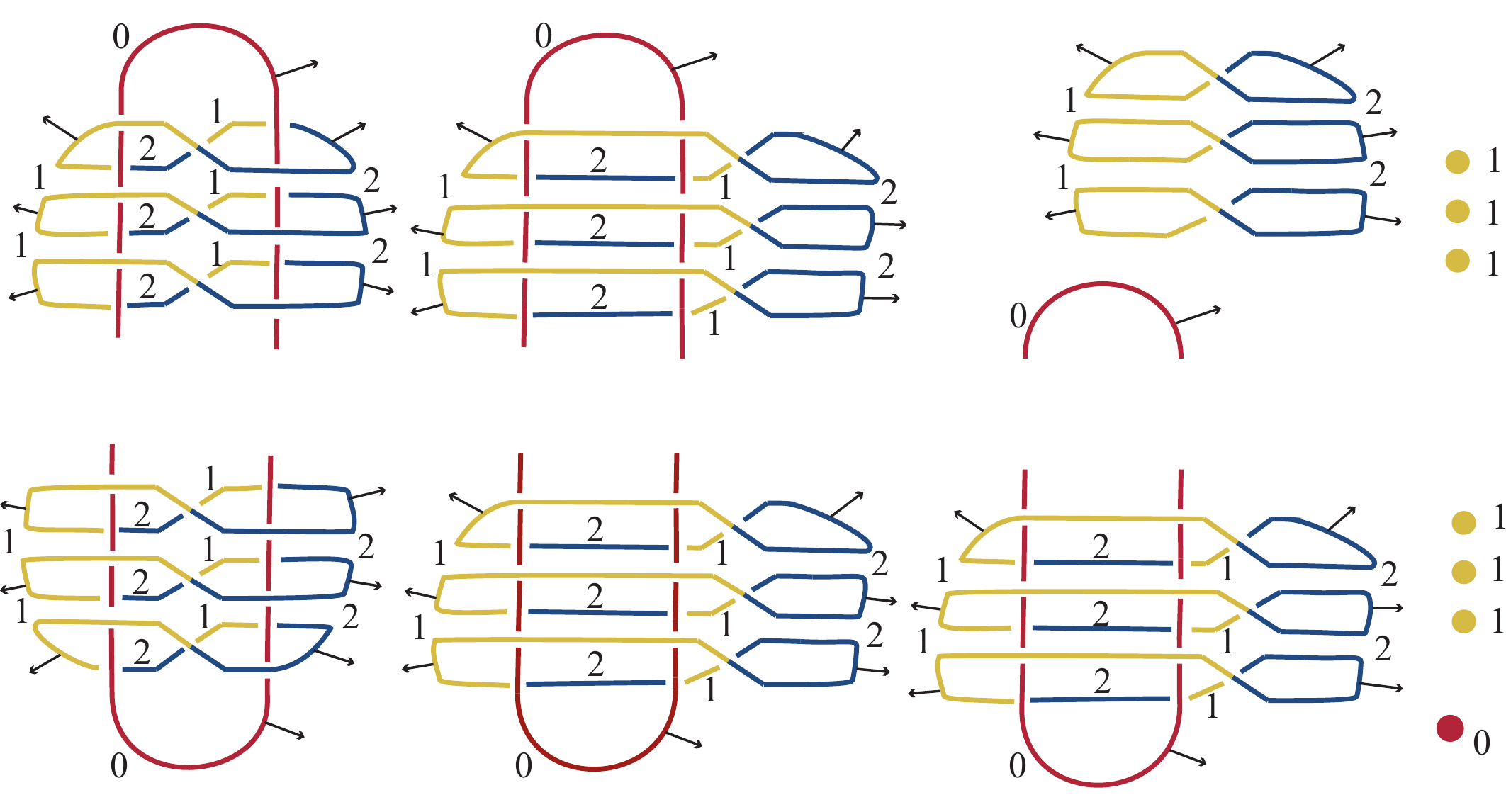}

\put(30,-20){(xiii)}\put(122,-20){(xiv)}\put(222,-20){(xv)}\put(284,-20){(max)}
\put(38,80){\vdots}\put(120,80){\vdots}\put(220,77){\vdots}\put(293,79){\vdots}

\end{overpic}

\vspace{2cm}

\caption{$(P_3,\rho)$-colored motion picture of $F$, 5 of 5.}
\label{fig:5}
\end{figure}

\begin{proof}[Proof of Theorem \ref{thm}.]
\normalfont 
Let $F$ be the surface-link whose motion picture is shown in Figures \ref{fig:motion1}, \ref{fig:2}, \ref{fig:3}, \ref{fig:4}, and \ref{fig:5}. This motion picture is $(P_3,\rho)$-colored. The first still shows the sole minimum of each component. There are $m$ non-orientable components $F'_i$, $k$ orientable components $F_i$, and one component $G$ such that $\cup_{i=1}^m F'_i \ \cup \ \cup_{i=1}^k F_i$ is a trivial surface-link. 

There are $2(k+m)$ saddle points, one minimum, and one maximum in the motion picture restricted to $G$. Therefore, the trivial orientable surface-knot $G$ has genus $k+m$. For any non-negative $g_i$, there are $g_i$ saddle points of $F_i$ in still (iv) and $g_i$ in still (ix). Each $F_i$ has a sole maximum, shown in still (xi). Therefore, $F_i$ has genus $g_i$ and bounds an obvious handlebody. For any positive even $g'_i$, there are $\frac{g'_i}{2}$ saddle points of $F'_i$ in still (iv), $\frac{g'_i}{2}-1$ in still (ix), and $\frac{g'_i}{2}$ in still (xii). Since each $F'_i$ has $\frac{g'_i}{2}$ maxima, $F'_i$ has genus $g'_i$.  

There are $\sum_{i=1}^m \frac{g'_i}{2}$ Reidemeister III moves between still (v) and (vi). These moves induce negative triple points each colored $(2,0,2)$. The sum of their $\theta$-weights is $0\oplus \sum_{i=1}^m \frac{g'_i}{2}$. Between still (xiii) and (xiv), there are $\sum_{i=1}^m \frac{g'_i}{2}$ Reidemeister III moves giving positive triple points each colored $(1,0,2)$. The $\theta$-weight sum of these triple points is $0\oplus \sum_{i=1}^m \frac{g'_i}{2}.$ Therefore, the $\theta$-weight of the induced broken sheet diagram  is $0\oplus \sum_{i=1}^m g'_i$, and Lemma \ref{lem} implies that $t(F = \cup_{i=1}^k F_i \ \cup \ \cup_{i=1}^m F'_i \ \cup \ G)=\sum_{i=1}^m g'_i$ since the induced broken sheet diagram has $\sum_{i=1}^m g'_i$ triple points.

\end{proof}

\section*{Acknowledgements} I would like to thank Kanako Oshiro for her support and Jason Joseph for his mentorship, comments, and edits. I would also like to thank Masahico Saito for his review and suggestions, as well as Scott Carter for his guidance in the field.

        \bibliographystyle{amsplain}
\bibliography{proposal}

\end{document}